\crefname{hypothesis}{Hypothesis}{Hypotheses}
\Crefname{ALC@unique}{Line}{Lines}
\colorlet{texcscolor}{blue!50!black}
\colorlet{texemcolor}{red!70!black}
\colorlet{texpreamble}{red!70!black}
\colorlet{codebackground}{black!25!white!25}
\lstdefinestyle{siamlatex}{%
  style=tcblatex,
  texcsstyle=*\color{texcscolor},
  texcsstyle=[2]\color{texemcolor},
  keywordstyle=[2]\color{texemcolor},
  moretexcs={cref,Cref,maketitle,mathcal,text,headers,email,url},
}
\DeclareTotalTCBox{\code}{ v O{} }
{ 
  fontupper=\ttfamily\color{black},
  nobeforeafter,
  tcbox raise base,
  colback=codebackground,colframe=white,
  top=0pt,bottom=0pt,left=0mm,right=0mm,
  leftrule=0pt,rightrule=0pt,toprule=0mm,bottomrule=0mm,
  boxsep=0.5mm,
  #2}{#1}
\patchcmd\newpage{\vfil}{}{}{}
\title{Adaptive Nonoverlapping Preconditioners for the Helmholtz Equation\thanks{Submitted to the editors May 1, 2025.
\funding{YY was supported by the National Natural Science Foundation of China (Project 12301497). ZZ was
partially supported by the National Natural Science Foundation of China (Project 12171406), the Hong Kong RGC grant (Project 17307921), and the Seed Funding Programme for Basic Research (HKU).}}}
\author{YI YU\thanks{School of Mathematics and Information Science, Guangxi University, P.\,R. China(\email{yiyu@gxu.edu.cn}).}
\and Marcus Sarkis\thanks{Mathematical Sciences, Worcester Polytechnic Institute, MA, USA(\email{msarkis@wpi.edu}).}
\and Guanglian Li\thanks{Department of Mathematics, The University of Hong Kong, Hong Kong, P.\,R. China(\email{lotusli@maths.hku.hk}).} \and 
Zhiwen Zhang\thanks{Department of Mathematics, The University of Hong Kong, Hong Kong, P.\,R. China (\email{zhangzw@hku.hk}).}}
\begin{document}
\maketitle

\begin{tcbverbatimwrite}{tmp_\jobname_abstract.tex}
\begin{abstract}
The Helmholtz equation poses significant computational challenges due to its oscillatory solutions, particularly for large wavenumbers. 
Inspired by the Schur complement system for elliptic problems, this paper presents a novel substructuring approach to mitigate the potential ill-posedness of local Dirichlet problems for the Helmholtz equation. We propose two types of preconditioners within the framework of nonoverlapping spectral additive Schwarz (NOSAS) methods. The first type of preconditioner focuses on the real part of the Helmholtz problem, while the second type addresses both the real and imaginary components, providing a comprehensive strategy to enhance scalability and reduce computational cost. Our approach is purely algebraic, which allows for adaptability to various discretizations and heterogeneous Helmholtz coefficients while maintaining theoretical convergence for thresholds close to zero. Numerical experiments confirm the effectiveness of the proposed preconditioners, demonstrating robust convergence rates and scalability, even for large wavenumbers.
\end{abstract}

\begin{keywords}
  Helmholtz equation,  large wavenumber, domain decomposition methods, Schur complement, adaptive coarse spaces, preconditioners.
\end{keywords}

\begin{AMS}
35J05, 65F08, 65F10, 65N55.

\end{AMS}
\end{tcbverbatimwrite}
\input{tmp_\jobname_abstract.tex}

\section{Introduction}
The time-dependent wave equation is a fundamental partial differential equation that arises in numerous scientific and engineering applications, including acoustics, electromagnetics, and seismic imaging. This paper focuses on preconditioning the Helmholtz equation, which corresponds to the time-harmonic case of the wave equation. Let $\Omega \subset \mathbb{R}^{2,3}$ be a bounded convex polygonal or polyhedral domain. We study the Helmholtz equation subject to impedance boundary conditions, formulated as:
\begin{equation}
    \label{hel_cont}
       \begin{split}
    -\Delta u-k^2u&=f~~~\mbox{in}~~~\Omega,\\
    \frac{\partial u}{\partial n}+iku&=g~~~\mbox{on}~~ \partial \Omega, 
     \end{split}
\end{equation}
where, without loss of generality, we assume the wavenumber $k \geq k_0 > 0$. The variational formulation of the Helmholtz equation is defined as follows: Given $f \in (H^1(\Omega))^\prime$ and $g \in H^{-1/2}(\Gamma)$, find $u \in H^1(\Omega)$ such that
\begin{equation}
\label{var_form}
    b(u,v)=l(v) \quad\text{for all } v\in H^1(\Omega),
\end{equation}
where 
\begin{align*}
  b(u,v)&:= \int_\Omega\nabla u\cdot\nabla\overline{ v}\mathrm{d}x-k^2\int_\Omega u\overline{v}\;\mathrm{d}x+ ik\int_{\partial \Omega} u\overline{ v} \;\mathrm{d}s\\
   l(v)&:=\int_\Omega f\overline{v}\;\mathrm{d}x+\int_{\partial \Omega} g\overline{v}\;\mathrm{d}s.    
\end{align*}
The existence and uniqueness of the weak solution of \cref{var_form} can be found in \cite{spence2014chapter}. 

Numerical schemes to solve the Helmholtz equation present significant challenges, especially when the wavenumber $k$ is large. Two main difficulties arise in the numerical solution of the Helmholtz equation. First, the solution $u$ exhibits highly oscillatory behavior, which requires fine meshes to accurately capture the oscillations. The number of degrees of freedom (DOFs) $N$ needed to resolve the solution typically scales as $N = k^d$, where $d$ is the spatial dimension. Furthermore, due to the "pollution effect" in the standard $H^1$-conforming variational formulation, achieving a numerical error that is comparable to the best approximation error requires even more stringent meshing conditions \cite{babuska1997pollution}. For example, when using piecewise linear finite element spaces, it is necessary that $k^2h$ be sufficiently small to maintain quasi-optimality, where $h$ denotes the mesh size. This requirement results in the generation of large linear systems that are computationally expensive to solve. Second, the resulting discrete systems are often strongly indefinite, leading to convergence issues for standard iterative solvers. Indefiniteness arises because the Helmholtz operator is not positive-definite, complicating the development of efficient solvers. Therefore, effective preconditioners are crucial to efficiently solving these large linear systems.

Over the years, several preconditioners have been proposed to address these challenges, each with its own set of advantages and limitations. The shifted Laplacian preconditioner \cite{erlangga2004class,MR3242986} is among the most commonly employed techniques. It involves solving a modified Helmholtz equation with a complex shift to enhance convergence properties. However, while this approach can accelerate convergence for small $k$, it still suffers from convergence difficulties for large wavenumbers. Sweeping preconditioners, as introduced in \cite{engquist2011sweeping, engquist2011sweeping2}, can achieve convergence in a few iterations, but their lack of parallelizability makes them less suitable for high-performance computing environments.

Domain Decomposition Methods (DDMs) divide a computational domain into smaller subdomains, allowing the problem to be solved in a parallelized manner. Traditional DDMs have demonstrated excellent performance and strong theoretical guarantees for elliptic preconditioners. However, they face significant challenges when applied to the Helmholtz equation. One of the main issues is that the local Dirichlet boundary problems can be ill-posed. Introducing a local impedance boundary condition, as suggested by \cite{kimn2007restricted}, can help alleviate this issue to some extent. However, achieving good convergence rates requires careful selection of the transmission conditions at the subdomain interfaces \cite{gander2019class, gander2006optimized}; see also \cite{MR4150274,MR4496963} for the analysis of such preconditioners. 

For overlapping domain decomposition methods, effective preconditioning generally requires a substantial overlap between subdomains. However, increasing the overlap also leads to greater computational costs and memory requirements, which can hinder scalability and efficiency, particularly in parallel computing environments. On the other hand, nonoverlapping methods have also been explored for the Helmholtz problem. These methods simplify the problem by eliminating overlap, which reduces communication costs between processors in parallel implementations. However, constructing an effective coarse space for nonoverlapping methods remains a considerable challenge. An effective coarse space must accurately capture the global behavior of the solution, which is difficult in Helmholtz problems because of their highly oscillatory nature and the indefiniteness of the operator.

Motivated by the success of using generalized eigenvalue problems to precondition elliptic equations with heterogeneous coefficients, we propose a novel substructuring approach and introduce two types of DDMs to address these challenges. A significant issue with traditional methods is the potential ill-posedness of the local Dirichlet boundary problem. To address this, we introduce a new iterative substructuring method, which is similar in concept to the Schur complement system used for elliptic problems. This new structure ensures the well-posedness of the local Dirichlet problems by incorporating the small-magnitude eigenvalues from each subdomain into the coarse problem. We note that because the partition of unity and overlapping are not used, our final results depend only on a few parameters, and there is no issue with respect to the redundancy of the coarse basis functions. We note that the use of generalized eigenvalue problems for the overlapping case was considered in \cite{MR4621835}. 

Another key challenge lies in constructing an effective coarse space for non-overlapping methods. To overcome this, we leverage the new substructuring approach along with the framework of Nonoverlapping Spectral Additive Schwarz (NOSAS) preconditioners \cite{yu2021additive, yu2024family} and propose two types of DDMs that tackle this issue. The proposed preconditioners aim to improve convergence by focusing on extracting eigenvalues that are either too small or too large. Initially, we consider only the real part of the Helmholtz problem, introducing two preconditioners: $P_{1}^{-1}$ and $P_{2}^{-1}$. While $P_{1}^{-1}$ provides a better convergence rate estimate, $P_{2}^{-1}$ is computationally less expensive, has better scalability, and achieves a comparable (albeit slightly worse) convergence rate. We then extend this approach to handle both the real and imaginary components of the Helmholtz system, resulting in two additional preconditioners: $P_{3}^{-1}$ and $P_{4}^{-1}$. Similarly, $P_{4}^{-1}$ is more cost-effective, provides better scalability, and offers convergence rates similar to those of $P_{3}^{-1}$. Importantly, all the generalized eigenvalue problems we consider involve real matrices with symmetric positive-definite right-hand sides, which allows us to compute the eigenvalues and eigenvectors using Cholesky decomposition rather than the generalized Schur factorization (QZ algorithm). Moreover, our construction is purely algebraic in nature, facilitating straightforward extension to other discretizations and the case of heterogeneous Helmholtz coefficients. This approach can be similarly applied to more general geometries and unstructured domain partitions. While the specific eigenvalue problems we solve do depend on the mesh and subdomain geometry, this dependence does not fundamentally alter the algebraic character of our method. Furthermore, the convergence theorems remain valid when the thresholds are close to zero. 

The remainder of this paper is organized as follows. In \Cref{section_2}, we introduce preliminary concepts related to discretization and domain decomposition notation. \Cref{section_3} presents a novel iterative substructuring scheme. In \Cref{section_4}, we introduce the first type of preconditioner, which focuses on the real part of the Helmholtz operator, demonstrating well-posedness and convergence when the threshold is chosen close to one. In \Cref{section_5}, we present the second type of preconditioner, designed to handle both real and imaginary components of the linear system, offering improved scalability when the imaginary part is globally assembled. Finally, \Cref{section_6} presents numerical experiments to validate the performance of the proposed preconditioners.

\section{Discretization and Domain Decomposition Notation}
\label{section_2}

To discretize the problem in \cref{hel_cont}, we first introduce a domain decomposition of $\Omega$. Let $\{\Omega_i\}_{i=1}^{N}$ be non-overlapping polygonal subdomains of size $O(H)$ that satisfy the following conditions:
\begin{equation*}
\overline{\Omega}=\bigcup_{i=1}^N\overline{\Omega}_i \hspace{10pt} \text{and} \hspace{10pt} \Omega_i\bigcap\Omega_j=\emptyset, \hspace{10pt}i\not=j.
\end{equation*}
Each subdomain is further partitioned into a union of shape-regular elements with mesh size $h$. This fine-scale partition of $\Omega$ is denoted by $\mathcal{T}_h$. We also define the interface and interior of each subdomain as $\Gamma_i$ and $I_i$, respectively. The global interface $\Gamma$ and the global interior $I$ are correspondingly defined as:
\begin{equation*}
  \Gamma_i = \partial \Omega_{i}, \quad    \Gamma=\bigcup_{i=1}^N \Gamma_{i},  \quad \mbox{and}\quad  I_i=\Omega_i,\quad I=\bigcup_{i=1}^N I_i.
\end{equation*}

The finite element space over $\mathcal{T}_h$ is denoted by $V_h(\Omega)$, and for convenience, we use the simplified notation $V_h$. Additionally, the local finite element space is defined as $V_h(\overline{\Omega}_i) = \{ v_h|_{\overline{\Omega}_i} : v_h \in V_h \}$.

The $H^1$-conforming Galerkin approximation to the problem \cref{var_form} seeks $u_h \in V_h$ such that:
\begin{equation}\label{eq:discrete-global}
    b(u_h,v_h)=l(v_h) \quad\text{for all } v_h\in V_h,
\end{equation}
and the corresponding linear system can be written as:
\begin{equation}
    \label{lin_form}
   Bu_h=l,
\end{equation}
where we use the same notation $u_h$ for both the finite element solution and its column vector of nodal values. Here, $B$ is the matrix representation of $b(\cdot, \cdot)$ in $V_h \times V_h$, and also denote $l$ as the column vector representation of $l(\cdot)$ in $V_h$.

Next, we consider the local assembly of the matrix $B$. We define the local sesquilinear form $b^{(i)}(\cdot, \cdot) : V_h(\Omega_i) \times V_h(\Omega_i) \to \mathbb{C}$ as:
\begin{equation*}
    b^{(i)}(u^{(i)},v^{(i)})=\int_{\Omega_i} (\nabla u^{(i)}\cdot\nabla \overline{v^{(i)}}-k^2u^{(i)}\overline{v^{(i)}})\mathrm{d}x +ik\int_{\partial\Omega_i\bigcap \partial\Omega}u^{(i)}\overline{v^{(i)}}\mathrm{d}s,
\end{equation*}
And the matrix form of $b^{(i)}(\cdot, \cdot)$ in $V_h(\Omega_i)$ is given by:
\begin{equation}\label{eq:localmat}
 B^{(i)}=\begin{bmatrix}
   B_{\Gamma\Gamma}^{(i)}       & B_{\Gamma I}^{(i)}    \\
    B_{I \Gamma}^{(i)}      & B_{I I}^{(i)}
\end{bmatrix},
\end{equation}
where the superscript denotes the subdomain number, and the subscripts denote the vectors associated with the nodal points in $\Gamma_i$ and $I_i$, respectively.

We note that \eqref{lin_form} can be expressed as:
\begin{equation*}
\begin{split}
 Bu_h=\begin{bmatrix}
   B_{\Gamma\Gamma}       & B_{\Gamma I}    \\
    B_{I \Gamma}      & B_{I I}
\end{bmatrix}u_h=\sum_{i=1}^N\begin{bmatrix}
  R^T_{\Gamma_i\Gamma} B_{\Gamma\Gamma}^{(i)} R_{\Gamma_i\Gamma}      & R_{\Gamma_i\Gamma}^TB_{\Gamma I}^{(i)} R_{I_iI}   \\
    R^T_{I_iI}  B_{I \Gamma}^{(i)} R_{\Gamma_i\Gamma}     & R^T_{I_iI}B_{I I}^{(i)}R_{I_iI}
\end{bmatrix}\begin{bmatrix}
  u_{\Gamma}    \\
    u_{I}     
\end{bmatrix}=\begin{bmatrix}
  l_\Gamma    \\
    l_I   
\end{bmatrix},
\end{split}
\end{equation*} 
where the extension operator $R_{\Gamma_i \Gamma}^{T} : V_h(\Gamma_i) \to V_h(\Gamma)$ is a zero extension from $\Gamma_i$ to $\Gamma$, with $V_h(\Gamma_i) := \{ v_{|{\Gamma_i}}; \forall v \in V_h(\Omega) \}$ and $V_h(\Gamma) := \{ v_{|{\Gamma}}; \forall v \in V_h(\Omega) \}$. Similarly, the extension operator $R_{I_i I}^{T} : V_h(I_i) \to V_h(I)$ is a zero extension from $I_i$ to $I$, with $V_h(I_i) := \{ v_{|{I_i}}; \forall v \in V_h(\Omega) \}$ and $V_h(I) := \{ v_{|{I}}; \forall v \in V_h(\Omega) \}$. The restriction operators $R_{\Gamma_i \Gamma} : V_h(\Gamma) \to V_h(\Gamma_i)$ and $R_{I_i I} : V_h(I) \to V_h(I_i)$ are the transposes of the corresponding extension operators, which restrict a nodal vector in a larger space to a nodal vector in a smaller space. An important property is that $R_{I_j I} R_{I_i I}^T$ is an identity matrix of size $n_i \times n_i$ if $i = j$, and a zero matrix of size $n_j \times n_i$ if $i \neq j$. Throughout, we use the superscript $T$ to denote the transpose and the superscript $*$ to denote the conjugate transpose.

We first introduce some standard notation for function spaces and their norms. For any $E \subset \Omega$, let $L^2(E)$ denote the standard Lebesgue space with the corresponding norm $\|\cdot\|_{L^2(E)}$, and let $H^1(E)$ denote the Sobolev space with the semi-norm $|\cdot|_{H^1(E)}$. We also define the following Helmholtz energy norm:
\begin{equation*}
\begin{split}
      ||v_h||_{\mathcal{H}}^2:= |v_h|^2_{H^1(\Omega)}+k^2||v_h||^2_{L^2(\Omega)}.
\end{split}    
\end{equation*}
When $\Omega_i$ is a subdomain of $\Omega$, we denote the corresponding norm on $\Omega_i$ by $\|\cdot\|_{\mathcal{H}, \Omega_i}$.

We consider the matrix representations of the norms $\|\cdot\|_{\mathcal{H}, \Omega_i}$ and $|\cdot|_{1, \Omega_i}$ as follows:
\begin{equation}
\label{a_i}
 H^{(i)}=\begin{bmatrix}
   H_{\Gamma\Gamma}^{(i)}       & H_{\Gamma I}^{(i)}    \\
    H_{I \Gamma}^{(i)}      & H_{I I}^{(i)}
\end{bmatrix},\quad\quad A^{(i)}=\begin{bmatrix}
   A_{\Gamma\Gamma}^{(i)}       & A_{\Gamma I}^{(i)}    \\
    A_{I \Gamma}^{(i)}      & A_{II}^{(i)}
\end{bmatrix},
\end{equation}
and the matrix representation of $\|\cdot\|_{\mathcal{H}}$ is given by:
\begin{equation*}
H=
\begin{bmatrix}
   H_{\Gamma\Gamma}       & H_{\Gamma I}    \\
    H_{I \Gamma}      & H_{II}
\end{bmatrix}=\sum_{i=1}^N\begin{bmatrix}
  R^T_{\Gamma_i\Gamma} H_{\Gamma\Gamma}^{(i)} R_{\Gamma_i\Gamma}      & R_{\Gamma_i\Gamma}^TH_{\Gamma I}^{(i)} R_{I_iI}   \\
    R^T_{I_iI}  H_{I \Gamma}^{(i)} R_{\Gamma_i\Gamma}     & R^T_{I_iI}H_{II}^{(i)}R_{I_iI}
\end{bmatrix}.
\end{equation*} 

Next, we recall the well-posedness of the discrete problem. Let $V_h$ be the finite element space on $\mathcal{T}_h$. We assume the following:

\begin{ass}[Discrete Inf-Sup Condition]
\label{ass_1}
Assume that $\Omega$ is a convex polygon/polyhedron domain,  for the finite element spaces $\mathcal{T}_h$, there exists a constant $\gamma>0$, such that the following discrete inf-sup condition holds:
\begin{equation*}
    \inf_{u\in V_h\backslash\{0\}}\sup_{v\in V_h\backslash\{0\}}\frac{ |b(u,v)|}{||u||_{\mathcal{H}}||v||_{\mathcal{H}}}\geq \gamma>0.
\end{equation*} 
\end{ass}

\begin{remark}
For linear finite element spaces, if $hk^2$ is sufficiently small, \cite[Proposition 8.2.7]{melenk1995generalized} shows that $\gamma = O\left(\frac{1}{1+k}\right)$, and the finite element solution $u_h$ satisfies:
\begin{equation*}
    ||u-u_{h}||_{\mathcal{H}} \lesssim  \inf_{v\in V_h(\Omega)} ||u-v||_{\mathcal{H}}\lesssim  hk (||f||_{L^2(\Omega)}+||g||_{L^2(\partial\Omega)}),
\end{equation*}
where $u$ is the exact solution of \cref{hel_cont}. The above mesh condition, along with these results, is typically referred to as $hk^2$-quasi-optimal. The $p$-finite element method and the $hp$-finite element methods were analyzed by Melenk and Sauter in \cite{melenk2010convergence} and \cite{melenk2011wavenumber}. For the $p$-finite element method, the best-known result is $hk^{\frac{p+1}{p}}$-quasi-optimal. The $hp$-finite element method achieves quasi-optimality if $kh/p$ is sufficiently small and $p \gtrsim 1 + \log(k)$ (i.e., $p$ grows logarithmically with $k$, with a fixed number of points per wavelength dependent on $p$). Such methods can be highly effective because the error converges exponentially with respect to the number of DOFs.

However, the above mesh condition can be too restrictive for practical computations. If we only require the well-posedness of a finite element solution, a less restrictive mesh condition can be used. For example, for FEM and CIP-FEM, if $hk^{\frac{2p+1}{2p}}$ is sufficiently small, the well-posedness of the discrete solution and the preasymptotic error estimate are guaranteed (see \cite{wu2014pre} and \cite{du2015preasymptotic}). Finally, depending on the type of finite element error (accuracy, data accuracy, quasi-optimality), different mesh conditions may be chosen. For a complete explanation, see \cite{pembery2020helmholtz}. In this paper, our preconditioner relies solely on the discrete inf-sup condition.
\end{remark}

Next, we introduce iterative substructuring methods by splitting the problem into $N$ local problems and a global interface problem. The classical approach to achieve this involves solving $N$ local homogeneous Dirichlet problems and one global inhomogeneous Neumann problem in $\Gamma$. The Neumann problem is obtained by eliminating interior unknowns at the $I_i$ nodes and introducing the Schur complement matrix $B_{\Gamma \Gamma}^{(i)} - B_{\Gamma I}^{(i)} (B_{I I}^{(i)})^{-1} B_{I \Gamma}^{(i)}$.

There are several issues with this approach when applied to the Helmholtz equation. The matrix $B_{I I}^{(i)}$ may be singular and/or indefinite. To address this, in \Cref{section_3}, we exclude a few DOFs in each subdomain. Another issue is that the Schur complement matrix can be singular and/or indefinite for floating subdomains. This problem is addressed in \Cref{section_4}.

\section{A New Iterative Substructuring Method} 
\label{section_3}

In this section, we introduce a novel substructuring approach that incorporates a global interface problem alongside $N$ local problems. The key idea is to extract small-magnitude eigenvalues from each subdomain $\Omega_i$ and incorporate them into the global problem to ensure the well-posedness of local problems. To achieve this, we consider the following generalized eigenvalue problem in each subdomain, seeking eigenpairs $(\mu^{(i)}_j, \phi_j^{(i)}) \in \mathbb{R} \times V_h(I_i)$ for $j=1, \ldots, n_i$, which satisfy:
\begin{equation}
\label{gen_eig_R00}
B_{II}^{(i)}\phi^{(i)}_j = \mu^{(i)}_jH_{II}^{(i)}\phi^{(i)}_j
\end{equation}
where the eigenvectors $\phi^{(i)}_j$ are orthonormal with respect to $H_{II}^{(i)}$, i.e., $(\phi^{(i)}_j)^T H_{II}^{(i)} \phi^{(i)}_k = \delta_{jk}$, $B_{II}^{(i)} \in \mathbb{R}^{n_i \times n_i}$ is symmetric, and $H_{II}^{(i)} \in \mathbb{R}^{n_i \times n_i}$ is symmetric and positive-definite, as defined in \cref{eq:localmat} and \cref{a_i}, respectively. Here, $n_i$ represents the DOFs in the interior of the subdomain $\Omega_i$. Consequently, the eigenvalues $\{\mu^{(i)}_j\}_{j=1}^{n_i}$ are real, non-increasingly ordered, and less than or equal to 1.

The rationale for choosing $H_{II}^{(i)}$ as the right-hand side matrix is its symmetric positive-definite nature, as well as the fact that the corresponding norm $\|\cdot\|_{\mathcal{H},\Omega_i}$ is used in the analysis. We select a small threshold value $\beta > 0$ and identify the eigenvalues whose magnitudes are smaller than $\beta$, that is, $|\mu^{(i)}_1| \leq |\mu^{(i)}_2| \leq \cdots \leq |\mu^{(i)}_{k_i}| < \beta$. We then construct $Q_S^{(i)} = [\phi^{(i)}_1, \ldots, \phi^{(i)}_{k_i}] \in \mathbb{R}^{n_i \times k_i}$ as the eigenspace corresponding to these small eigenvalues, the remaining eigenfunctions being denoted by $Q_L^{(i)} = [\phi^{(i)}_{k_i+1}, \ldots, \phi^{(i)}_{n_i}] \in \mathbb{R}^{n_i \times (n_i - k_i)}$. Here, the notation $S$ and $L$ represent "small" and "large" modes, respectively.

Accordingly, any $u^{(i)} \in V_h(\Omega_i)$ can be represented as:
\begin{equation*}
  u^{(i)} = \begin{bmatrix}
    u_{\Gamma_i} \\
    u_{I_i}
  \end{bmatrix} = \begin{bmatrix}
    I_{\Gamma_i} & 0 & 0 \\
    0 & Q_S^{(i)} &  Q_L^{(i)}
  \end{bmatrix} \begin{bmatrix}
    u_{\Gamma_i} \\
    \alpha_S^{(i)} \\
    \alpha_L^{(i)}
  \end{bmatrix}.
\end{equation*}
Here, $\alpha_S^{(i)} = [\alpha_{1}^{(i)}, \ldots, \alpha_{k_i}^{(i)}]^T \in \mathbb{C}^{k_i}$ and $\alpha_L^{(i)} = [\alpha_{k_i+1}^{(i)}, \ldots, \alpha_{n_i}^{(i)}]^T \in \mathbb{C}^{n_i - k_i}$. In this context, $\alpha_S^{(i)}$ and $\alpha_L^{(i)}$ represent the coefficients of $u_{I_i}$ with respect to the orthogonal bases $Q_S^{(i)}$ and $Q_L^{(i)}$, respectively.

Using the above notations, we can rewrite the linear system \cref{lin_form} as follows:
\begin{subequations}
    \begin{align}
       \label{1a}B_{\Gamma\Gamma}u_\Gamma + \sum_{i=1}^N R_{\Gamma_i\Gamma}^{T}B_{\Gamma I}^{(i)}Q_L^{(i)}\alpha_L^{(i)} + \sum_{i=1}^N R_{\Gamma_i\Gamma}^{T}B_{\Gamma I}^{(i)}Q_S^{(i)}\alpha_S^{(i)} &= l_\Gamma, \\
        \label{1b}B_{I\Gamma}u_\Gamma + \sum_{i=1}^N R_{I_iI}^{T}B_{I I}^{(i)}Q_L^{(i)}\alpha_L^{(i)} + \sum_{i=1}^N R_{I_iI}^{T}B_{I I}^{(i)}Q_S^{(i)}\alpha_S^{(i)} &= l_I.
     \end{align}
\end{subequations}

Next, we aim to reformulate the above system into one global interface problem and $N$ local problems. For the local problems, we consider:
\begin{equation*}
u_i = Q_L^{(i)}\big(Q_L^{(i)^T}B_{II}^{(i)}Q_L^{(i)}\big)^{-1}Q_L^{(i)^T} R_{I_i I}l_I.
\end{equation*}

In practice, the dimension $k_i$ of $Q_S^{(i)}$ is usually small, often zero, while $n_i - k_i$ is large. Therefore, it is computationally efficient to avoid directly calculating $Q_L^{(i)}$. Instead of directly computing $\big(Q_L^{(i)^T}B_{II}^{(i)}Q_L^{(i)}\big)^{-1}$, let us denote $B_L^{(i)} = Q_L^{(i)}\big(Q_L^{(i)^T}B_{II}^{(i)}Q_L^{(i)}\big)^{-1}Q_L^{(i)^T}$. To obtain $u_i$, we consider the following saddle point problem:
\begin{equation*}
\label{localmatrix}
\begin{bmatrix}
  B_{II}^{(i)}   & A_{II}^{(i)} Q_S^{(i)} \\
     Q_S^{(i)^T}A_{II}^{(i)^T} & 0 
\end{bmatrix}
\begin{bmatrix}u_i\\
 \lambda_S^{(i)}
\end{bmatrix} =
\begin{bmatrix} R_{I_iI} l_I\\
  {0}
\end{bmatrix}.
\end{equation*}
where $\lambda_S^{(i)} \in \mathbb{C}^{k_i}$ is a Lagrange multiplier. The well-posedness of this saddle point problem can be established by proving the uniqueness of the solution, utilizing the orthogonality of $Q_L^{(i)}$ and $Q_S^{(i)}$ with respect to the $H_{II}^{(i)}$-norm. Numerically, this problem can be solved using an $LDL^T$ factorization.

Next, we derive the global interface problem. 
Multiply $Q_L^{(i)^T} $ by \cref{1b} to obtain an explicit expression of $\alpha_{L}^{(i)}$ in terms of $u_i$ and $u_\Gamma^{(i)}$, for $1 \leq i \leq N$, and substitute in \cref{1a}. Also multiplying $Q_S^{(i)^T} R_{I_iI}$ in \cref{1b} again to eliminate each $\alpha_{L}^{(i)}$, the interface problem can be expressed as follows:
\begin{equation*}
     \begin{split}
       \sum_{i=1}^NR_{\Gamma_i\Gamma}^{T} \hat{B}_{\Gamma\Gamma}^{(i)}R_{\Gamma_i\Gamma}u_\Gamma+\sum_{i=1}^NR_{\Gamma_i\Gamma}^{T}B_{\Gamma I}^{(i)}Q_S^{(i)}\alpha_S^{(i)}&=l_\Gamma-\sum_{i=1}^NR_{\Gamma_i\Gamma}^{T}B_{\Gamma I}^{(i)} u_i,\\
Q_S^{(i)^T}B_{I\Gamma}^{(i)}R_{\Gamma_i\Gamma}u_\Gamma+Q_S^{(i)^T}B_{II}^{(i)}Q_S^{(i)}\alpha_S^{(i)}&= Q_S^{(i)^T}R_{I_iI}l_I, \quad \text{ for } 1\leq i\leq N
     \end{split}
 \end{equation*}
where we use the fact that $Q_L^{(i)^T} B_{II}^{(i)} u_i = 0$, and $\hat{B}_{\Gamma\Gamma}^{(i)} = B_{\Gamma\Gamma}^{(i)} - B_{\Gamma I}^{(i)} B_L^{(i)} B_{I\Gamma}^{(i)}$. The global problem can then be expressed in the following matrix form:
\begin{equation}
    \label{coarse_matrix}
    \sum_{i=1}^N
    \begin{bmatrix}
        \displaystyle{ R_{\Gamma_i\Gamma}^T \hat{B}_{\Gamma\Gamma}^{(i)}R_{\Gamma_i\Gamma}} & \displaystyle{R_{\Gamma_i\Gamma}^{T}B_{\Gamma I}^{(i)}Q_S^{(i)}R_S^{(i)}}\\[6pt]
        R_S^{(i)^T}Q_S^{(i)^T}B_{I\Gamma}^{(i)}R_{\Gamma_i\Gamma} & R_S^{(i)^T}Q_S^{(i)^T}B_{I I}^{(i)} Q_S^{(i)}R_S^{(i)}
    \end{bmatrix} 
    \begin{bmatrix}u_\Gamma\\[3pt]
    \alpha_S
    \end{bmatrix} =
    \sum_{i=1}^N
    \begin{bmatrix}l_\Gamma^{(i)}-R_{\Gamma_i\Gamma}^{T}{B}^{(i)}_{\Gamma I} u_i \\[6pt]
    R_S^{(i)^T}Q_N^{(i)^T}R_{I_iI}l_I
    \end{bmatrix},
\end{equation}
where $\alpha_S = [\alpha_S^{(1)}, \cdots, \alpha_S^{(N)}]^T$, $R_S^{(i)}$ is the restriction operator for selecting the local $i$-th index set $\alpha_S^{(i)}$ from all index sets $\alpha_S$, and $R_S^{(i)^T}$ is the corresponding transpose operator.

As mentioned earlier, the computation of $\hat{B}_{\Gamma\Gamma}^{(i)}$ using $B_L^{(i)}$ can be avoided by employing the following saddle point problem with Lagrange multipliers:
\begin{equation*}
   \begin{bmatrix}
     v_{\Gamma_i} \\
     \hat{v}_i \\
     \hat{\mu}_S^{(i)} \end{bmatrix}^T  
   \begin{bmatrix}
   B_{\Gamma \Gamma}^{(i)} & B_{\Gamma I}^{(i)} & 0  \\
   B_{I \Gamma}^{(i)}&  B_{I I}^{(i)} &  H_{II}^{(i)} Q_S^{(i)} \\  
   0      &  Q_S^{(i)^T}H_{II}^{(i)}  & 0
   \end{bmatrix}  \begin{bmatrix} u_{\Gamma_i} \\ \hat{u}_i \\
     \hat{\lambda}_S^{(i)} 
   \end{bmatrix}.
\end{equation*} 

This construction is purely algebraic. We now transform this formulation into a two-level nonoverlapping additive Schwarz method. We define the new interface space and new local spaces as follows:

\begin{definition}(New space)
Let
$$V_0 := V_h(\Gamma) \oplus \sum_{i=1}^N R_S^{(i)^T}\alpha_S^{(i)},\text{ and } \,\, V_i := \text{Range}(Q_L^{(i)}),$$
then the direct sum decomposition holds:
\[
 V_h(\Omega) = R_0^T V_0 \oplus R_1^T V_1 \oplus \dots R_N^T V_N.
\]
Here, $R_i^T : V_i \to V_h(\Omega)$ for $1 \leq i \leq N$ represents the zero extension to the nodal points in $\Omega_h \setminus I_i$. The extension operator $R_0^T : V_0 \to V_h(\Omega)$ is defined as:
\begin{equation*}
   R_0^T u_0 = \begin{bmatrix}
 I_\Gamma   & 0 \\
\displaystyle{\sum_{i=1}^N}-\!\!R_{I_iI}^TB_L^{(i)} B_{I\Gamma}^{(i)}R_{\Gamma_i\Gamma}  & \displaystyle{\sum_{i=1}^N}R_{I_iI}^TQ_S^{(i)}R_S^{(i)}
\end{bmatrix} u_0,
\end{equation*}
with its transpose operator $R_0: V_h(\Omega) \to V_0$ defined as:
\begin{equation*}
   R_0 u_h = \begin{bmatrix}
 I_\Gamma  & \displaystyle{\sum_{i=1}^N}-\!\!R_{\Gamma_i\Gamma}^TB_{\Gamma I}^{(i)}B_L^{(i)}  R_{I_iI}  \\
0 &  \displaystyle{\sum_{i=1}^N}R_S^{(i)^T}Q_S^{(i)^T} R_{I_iI}
\end{bmatrix} u_h,
\end{equation*}
where $I_\Gamma$ is the identity matrix with respect to $\Gamma$. We also define the local component of the coarse space $V_0^{(i)} = V_h(\Gamma_i) \oplus \alpha_S^{(i)}$.
\end{definition}

We follow the procedure of two-level additive Schwarz methods to construct the local and coarse solvers. First, we consider the local problem. We define the local sesquilinear form $b_{i}(\cdot,\cdot)$ on the local space $V_i$ as:
\begin{equation*}
\label{local_ses}
    b_{i}(u_i,v_i) = b(R_i^Tu_i, R_i^Tv_i) \hspace{15pt} \forall u_i, v_i \in V_i, \quad 1 \leq i \leq N.
\end{equation*}

Next, we define the projection-like operator $P_i: V_h \to V_h$ given by $P_i = R_{i}^{T}\tilde{P}_i$, where $\tilde{P}_i: V_h \to V_i$ is defined as the local solver for the following local problem:
\begin{equation}
\label{local_pb}
    b_i(\tilde{P}_iu_h, v_i) = b(u_h, R_{i}^{T}v_i) \hspace{25pt} \forall v_i \in V_i.
\end{equation}

The well-posedness of $\tilde{P}_i$ depends on the invertibility of the local sesquilinear form $b_{i}(\cdot,\cdot)$. It is important to note that since we choose $Q_L^{(i)}$ as the orthonormal basis of $H_{II}^{(i)}$, we have:
$$Q_L^{(i)^T}H_{II}^{(i)}Q_L^{(i)}=I^{(i)},\quad\text{and}\quad B_i = Q_L^{(i)^T}B_{II}^{(i)}Q_L^{(i)} = \text{diag}\{\mu_{k_{i+1}}, \dots, \mu_{n_i}\},$$
where $I^{(i)}$ is the identity matrix.

Next, we define the global sesquilinear form $b_0(\cdot,\cdot)$ on the interface space $V_0$ as:
\begin{equation}
\label{coarse_ses}
    b_0(u_0,v_0) = b(R_0^Tu_0, R_0^Tv_0) \hspace{15pt} \forall u_0, v_0 \in V_0,
\end{equation}
and we denote the corresponding matrix form as $B_0$, which is the left-hand side matrix in \cref{coarse_matrix}. We consider the projection-like operator $P_0 : V_0 \to V_h$ given by $P_0 = R_0^T \tilde{P}_0$, where $\tilde{P}_0 : V_h(\Omega) \to V_0$ is defined for the following global interface problem:
\begin{equation}
\label{coarse_pb}
  {b}_0(\tilde{P}_0u_h, v_0) = b(u_h, R_0^Tv_0) \hspace{25pt} \forall v_0 \in V_0.
\end{equation}

We note that the matrix form of the above coarse problem \cref{coarse_pb} is exactly \cref{coarse_matrix}. The well-posedness of $P_0$ depends on showing the inf-sup condition of the global sesquilinear form $b_0(\cdot,\cdot)$. To demonstrate this, we first define a norm $|||\cdot|||$ in the space $V_0$.  

For any $u_0 = \begin{bmatrix} u_\Gamma \\ \alpha_S \\ \end{bmatrix} \in V_0$, let $|||u_0||| = \|\mathcal{H}_0^T u_0\|_{\mathcal{H}}$, where the minimum $\mathcal{H}$-energy extension operator $\mathcal{H}_0^T : V_0 \to V_h(\Omega)$ is defined as:

\begin{equation*}
   \mathcal{H}_
0^T u_0 = \begin{bmatrix}
 I_\Gamma   & 0 \\
\displaystyle{\sum_{i=1}^N}-\!\!R_{I_iI}^TH_L^{(i)}H_{I\Gamma}^{(i)}R_{\Gamma_i\Gamma}  & \displaystyle{\sum_{i=1}^N}R_{I_iI}^TQ_S^{(i)}R_S^{(i)}
\end{bmatrix} u_0,
\end{equation*}
where $H_L^{(i)} = Q_L^{(i)} \big(Q_L^{(i)^T} H_{II}^{(i)} Q_L^{(i)}\big)^{-1} Q_L^{(i)^T}$. Since $\mathcal{H}_0^T$ is the minimum $\mathcal{H}$-energy extension, we have $\|\mathcal{H}_0^T u_0\|_{\mathcal{H}} \leq \|R_0^T u_0\|_{\mathcal{H}}$. The following theorem shows the stable decomposition of the coarse space and local spaces, as well as the inf-sup condition of $b_0(\cdot,\cdot)$, which guarantees the stability of the new system.

\begin{theorem}
\label{infsup_b_0}
Any $u_h\in V_h$ admits the unique decomposition in the form:
\begin{align*}
u_h=R_0^Tu_0+\displaystyle{\sum_{i=1}^N}R_i^Tu_i
\end{align*}
with $u_0\in V_0$ and $u_i\in V_i$ for $1\leq i\leq N$. 
Furthermore,  for any $v_0\in V_0$, we have 
 $$b_0(u_0,v_0)=b(u_h,R_0^Tv_0),$$
and  
\begin{equation*}
 \inf_{u_0\in V_0\backslash\{0\}}\sup_{v_0\in V_0\backslash\{0\}}\frac{|{b}_0(u_0,{v}_0)|}{|||u_0|||\hspace{3pt}|||{v}_0|||}\geq \gamma.
 \end{equation*}
\end{theorem}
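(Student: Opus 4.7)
The plan is to prove the three assertions in order --- unique decomposition, the consistency relation $b_0(u_0,v_0)=b(u_h,R_0^T v_0)$, and the inf-sup estimate --- by exploiting two orthogonality properties of the eigenbasis $[Q_S^{(i)},Q_L^{(i)}]$ produced by \cref{gen_eig_R00}: the $H_{II}^{(i)}$-orthonormality $Q_S^{(i)^T}H_{II}^{(i)}Q_L^{(i)}=0$ and the $B_{II}^{(i)}$-orthogonality $Q_S^{(i)^T}B_{II}^{(i)}Q_L^{(i)}=0$, the latter being automatic for eigenvectors of a generalized problem with symmetric matrices. Together these yield a $b$-orthogonal splitting between $\Range(R_0^T)$ and each $R_i^T V_i$ and, in parallel, an $\mathcal{H}$-orthogonal splitting between $\Range(\mathcal{H}_0^T)$ and each $R_i^T V_i$; it is this second splitting that makes the choice of $|||\cdot|||$ (built from $\mathcal{H}_0^T$ rather than $R_0^T$) compatible with the inf-sup reduction.

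Part~1 follows from a block argument. Since $R_i^T u_i$ has zero $\Gamma$-trace, the boundary component of $u_0$ must equal $u_\Gamma$; on each subdomain interior the identity $u_{I_i}+B_L^{(i)}B_{I\Gamma}^{(i)}R_{\Gamma_i\Gamma}u_\Gamma=Q_S^{(i)}\alpha_S^{(i)}+Q_L^{(i)}\alpha_L^{(i)}$ then has a unique solution because $[Q_S^{(i)},Q_L^{(i)}]$ is a basis of $V_h(I_i)$. For Part~2 it suffices to establish the $b$-orthogonalities $b(R_0^T u_0,R_i^T v_i)=0$ and $b(R_i^T u_i,R_0^T v_0)=0$: a direct block computation shows that the $B_{\Gamma I}^{(i)}$ boundary-coupling contribution cancels against the $B_L^{(i)}$-extension piece via the identity $Q_L^{(i)^T}B_{II}^{(i)}B_L^{(i)}=Q_L^{(i)^T}$, while the residual $Q_S$--$Q_L$ cross term vanishes by $Q_S^{(i)^T}B_{II}^{(i)}Q_L^{(i)}=0$. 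Summing across the Part-1 decomposition then delivers $b(u_h,R_0^T v_0)=b_0(u_0,v_0)$.

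For the inf-sup, given $u_0\ne 0$ set $\tilde u_h:=R_0^T u_0\in V_h$ and apply \cref{ass_1} to obtain $\tilde v_h\in V_h$ with $|b(\tilde u_h,\tilde v_h)|\ge\gamma\|\tilde u_h\|_{\mathcal{H}}\|\tilde v_h\|_{\mathcal{H}}$. Decomposing $\tilde v_h=R_0^T v_0+\sum_i R_i^T v_i$ through Part~1 and invoking the $b$-orthogonality from Part~2 collapses $b(\tilde u_h,\tilde v_h)$ to $b_0(u_0,v_0)$. The minimality of the $\mathcal{H}$-extension then gives $\|\tilde u_h\|_{\mathcal{H}}=\|R_0^T u_0\|_{\mathcal{H}}\ge\|\mathcal{H}_0^T u_0\|_{\mathcal{H}}=|||u_0|||$ immediately. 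For the dual inequality $\|\tilde v_h\|_{\mathcal{H}}\ge|||v_0|||$, the argument will show that $\tilde v_h-\mathcal{H}_0^T v_0\in\bigoplus_i R_i^T V_i$ --- verified by matching $\Gamma$-traces and the $Q_S^{(i)^T}H_{II}^{(i)}$-projections of both extensions on each $I_i$ --- and that $\Range(\mathcal{H}_0^T)$ is $\mathcal{H}$-orthogonal to $\bigoplus_i R_i^T V_i$, which mirrors the Part-2 computation with $H_{II}^{(i)}$ replacing $B_{II}^{(i)}$ and uses $Q_L^{(i)^T}H_{II}^{(i)}H_L^{(i)}=Q_L^{(i)^T}$ together with $Q_S^{(i)^T}H_{II}^{(i)}Q_L^{(i)}=0$. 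A Pythagoras identity then yields $\|\tilde v_h\|_{\mathcal{H}}^2\ge|||v_0|||^2$, and combining the three bounds produces $|b_0(u_0,v_0)|\ge\gamma|||u_0|||\,|||v_0|||$.

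The main technical obstacle is this last step --- the $\mathcal{H}$-orthogonality between $\Range(\mathcal{H}_0^T)$ and $\bigoplus_i R_i^T V_i$. This is precisely why the coarse norm $|||\cdot|||$ is defined through the $H$-harmonic extension $H_L^{(i)}$ rather than the $B$-harmonic extension $B_L^{(i)}$ appearing in $R_0^T$: only with $\mathcal{H}_0^T$ does the Pythagorean bound $\|\tilde v_h\|_{\mathcal{H}}\ge|||v_0|||$ hold, which is exactly what allows the coarse inf-sup constant to inherit the value $\gamma$ from \cref{ass_1} without any deterioration.
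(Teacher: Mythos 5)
Your proposal is correct and follows essentially the same route as the paper's proof: construct the unique block decomposition, reduce $b(R_0^T u_0, v_h)$ to $b_0(u_0,v_0)$, and pass the discrete inf-sup constant $\gamma$ through to the coarse form using $\|R_0^T u_0\|_{\mathcal{H}}\ge|||u_0|||$ and $\|v_h\|_{\mathcal{H}}\ge|||v_0|||$. The paper's proof is terser and leaves the $b$-orthogonality between $\Range(R_0^T)$ and $\bigoplus_i R_i^T V_i$, as well as the $\mathcal{H}$-Pythagorean bound giving $\|v_h\|_{\mathcal{H}}\ge|||v_0|||$, implicit; you spell both out correctly via the identities $Q_S^{(i)^T}H_{II}^{(i)}Q_L^{(i)}=0$, $Q_S^{(i)^T}B_{II}^{(i)}Q_L^{(i)}=0$, and $Q_L^{(i)^T}H_{II}^{(i)}H_L^{(i)}=Q_L^{(i)^T}$.
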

\begin{proof}
Let $u_h=\begin{bmatrix}
  u_{\Gamma}    \\
    u_{I}     
\end{bmatrix}=\begin{bmatrix}u_\Gamma \\ \displaystyle{\sum_{i=1}^NR_{I_iI}^TQ_L^{(i)}\alpha_L^{(i)}+\sum_{i=1}^NR_{I_iI}^TQ_S^{(i)}\alpha_S^{(i)}}
\end{bmatrix} \in V_h(\Omega),$ then let ${u}_i=B_L^{(i)}(B_{I\Gamma}^{(i)}R_{\Gamma_i\Gamma}u_\Gamma+B_{II}^{(i)}R_{I_iI}u_I)\in V_i$  for $1\leq i \leq N$, and $u_0= \begin{bmatrix}
  u_{\Gamma}    \\
    \alpha_S
\end{bmatrix}\in {V}_0$. It is easy to check that $u_h=R_0^Tu_0+\displaystyle{\sum_{i=1}^N}R_i^T{u}_i.$ 

Next, we show the inf-sup condition of $b_0(\cdot,\cdot)$. Notice that by the definition of $b_0(\cdot,\cdot)$ in \cref{coarse_ses}, we have $b_0({u}_0,{v}_0)=b(R_0^T{u}_0,v_h),$ for any $v_h$ that satisfies $v_h=R_0^Tv_0+\displaystyle{\sum_{i=1}^N}R_i^T{v}_i$ with $v_i\in V_i$ for $1\leq i\leq N$.
Using the inf-sup condition of $b(\cdot,\cdot)$, we know that
for any $u=R_0^T{u}_0,$ there exists $v_h\in V_h,$ such that 
$$|b(R_0^T{u}_0,v_h)|\geq \gamma ||R_0^T{u}_0||_{\mathcal{H}}||v_h||_{\mathcal{H}}.
$$
Together with the definition of the norm $|||\cdot|||$, we have that for any ${u}_0$, there exists ${v}_0,$ such that:

\begin{equation*}
    | b_0({u}_0,{v}_0)|=| b(R_0^T{u}_0,v_h)|\geq \gamma ||R_0^T{u}_0||_{\mathcal{H}}||v_h||_{\mathcal{H}}\geq \gamma|||{u}_0|||\hspace{3pt}|||{v}_0|||.
\end{equation*}
\vspace{1pt}
\end{proof}

\Cref{infsup_b_0} implies that the well-posedness of the coarse problem \cref{coarse_pb} is a direct consequence of the well-posedness of the discrete problem \cref{eq:discrete-global}. Moreover, the inf-sup constant of the coarse problem is greater than or equal to the inf-sup constant of the discrete problem. Following the construction outlined above, we obtain an exact solver for the coarse problem and $N$ local problems, expressed as:
\begin{equation*}
  \label{first_pd}  B^{-1}=R_0^T({B}_0)^{-1}R_0+\displaystyle{\sum_{i=1}^N}{R}_i^TQ_P^{(i)}(B_i)^{-1}Q_P^{(i)^T}{R}_i
\end{equation*}
However, it is important to note that this direct solver has significant drawbacks in practical computation, primarily due to the computational cost associated with solving the interface problem. The size of the interface problem is determined by the number of DOFs in $V_h(\Gamma)$ plus $\sum_{i=1}^N k_i$.

In the next section, we will explore and discuss effective preconditioners for the interface problem $B_0$ using the NOSAS (Nonoverlapping Spectral Additive Schwarz) methods framework. We believe that this approach can be extended to other classes of Schwarz methods, providing efficient and scalable solvers for the Helmholtz problem.

\section{NOSAS Preconditioners for $\Re B_0$} 
\label{section_4}

In this section, we utilized the concept of Nonoverlapping Spectral Additive Schwarz (NOSAS) to develop a preconditioner for the newly introduced substructure. We design the preconditioner for $B_0 = \displaystyle{\sum_{i=1}^N} R_{\Gamma,S}^{(i)^T} B_0^{(i)} R_{\Gamma,S}^{(i)}$, where: \[
    B_0^{(i)} = 
    \begin{bmatrix}
        \hat{B}_{\Gamma\Gamma}^{(i)} & B_{\Gamma I}^{(i)} Q_S^{(i)} \\[6pt]
        Q_S^{(i)^T} B_{I\Gamma}^{(i)} & Q_S^{(i)^T} B_{I I}^{(i)} Q_S^{(i)}
    \end{bmatrix},
\]
and $R_{\Gamma,S}^{(i)} = \begin{bmatrix}
    R_{\Gamma_i, \Gamma} & 0 \\
    0 & R_S
\end{bmatrix}$ is the restriction operator, with $R_{\Gamma,S}^{(i)^T}$ representing its transpose. It is important to emphasize that $B_0$ is a complex matrix, necessitating separate treatment of its real and imaginary parts, which will be discussed in \Cref{section_5}. In this section, we focus on preconditioning the real component of $B_0$, represented by $\Re B_0$, while leaving the imaginary component $\Im B_0$ unchanged. We first introduce a preconditioner along with its theoretical proof of the convergence rate when thresholds close to zero. Additionally, we suggest a computationally more efficient algorithm that achieves similar numerical performance, albeit with a weaker bound in the convergence proof.

\subsection{Preconditioner $P_1^{-1}$}
Let us consider the following generalized eigenvalue problems for $i=1,\cdots,N$:
\begin{equation}
\label{gen_eig_R11}
\Re B_0^{(i)}\xi^{(i)}_j=\lambda^{(i)}_j H_{0}^{(i)}\xi^{(i)}_j
\hspace{30pt} (j=1,\cdots, N_i),
\end{equation}
where the eigenvectors $\{\xi^{(i)}_j\}$ are orthonormal with respect to $H_{0}^{(i)}$, and $N_i$ is the number of DOFs on $\Gamma_i$ combined with the dimensions of $Q_S^{(i)}$. The right-hand side positive-definite matrix $H_0^{(i)}$ corresponds to the minimum $\mathcal{H}$-energy extension $||\cdot||_{\mathcal{H}}$ norm as follows:
\begin{equation}
\label{H_0}
H_0^{(i)}=\begin{bmatrix}
   \hat{H}_{\Gamma\Gamma}^{(i)} & H_{\Gamma I}^{(i)}Q_S^{(i)} \\
   Q_S^{(i)^T}H_{I\Gamma}^{(i)} & Q_S^{(i)^T}H_{I I}^{(i)} Q_S^{(i)}
   \end{bmatrix},
\end{equation}
with $\hat{H}_{\Gamma\Gamma}^{(i)} = H_{\Gamma\Gamma}^{(i)} - H_{\Gamma I}^{(i)} H_L^{(i)} H_{I\Gamma}^{(i)}$. We note that $\forall u_0 \in V_0$, we have:
$$u_0^H\displaystyle{\sum_{i=1}^N R_{\Gamma,N}^{(i)^T}H_0^{(i)}R_{\Gamma,N}^{(i)}}u_0=||\mathcal{H}_
0^Tu_0||_{\mathcal{H}}^2.$$

We set a threshold $\eta \in (0, 1)$ and denote $Q_1^{(i)} = [\xi^{(i)}_1, \xi^{(i)}_2, \ldots, \xi^{(i)}_k]$, where the chosen eigenfunctions correspond to eigenvalues smaller than $1-\eta$ or greater than $1 + \eta$. Let $Q_1^{(i)^\perp} = [\xi^{(i)}_{k+1}, \ldots, \xi^{(i)}_{N_i}]$, which are the eigenfunctions corresponding to the eigenvalues between $1-\eta$ and $1 + \eta$. And define the projection operator $\Pi_1^{(i)}=Q_1^{(i)}(Q_1^{(i)^T}H_0^{(i)}Q_1^{(i)})^{-1}Q_1^{(i)^T}H_0^{(i)}=Q_1^{(i)}Q_1^{(i)^T}H_0^{(i)}$.

Next, note that $H_0^{(i)}$ is a dense matrix. To facilitate parallelization of the coarse problem, we consider using a symmetric positive-definite matrix $C_0^{(i)}$ with a block diagonal structure. The construction of $C_0^{(i)}$ follows two main principles:

1. Approximation to $\Re B_0^{(i)}$: The matrix $C_0^{(i)}$ should closely approximate $\Re B_0^{(i)}$ to minimize the number of eigenfunctions needed, implying that the generalized eigenvalues should be as close to 1 as possible.

2. Block Diagonal Structure: The matrix should be locally block diagonal with respect to the vertices and edges in each subdomain. This ensures that the globally assembled matrix retains a block-diagonal structure, facilitating parallel computation.

Thus, we propose the following construction for $C_0^{(i)}$:
\begin{equation*}
\label{hat_C_0}
 {C}_{0}^{(i)}=\begin{bmatrix}
   \hat{C}_{\Gamma\Gamma}^{(i)} & 0\\
   0 & Q_s^{(i)^T}H_{I I}^{(i)} Q_s^{(i)}
   \end{bmatrix},
\end{equation*}
where $\hat{C}_{\Gamma\Gamma}^{(i)}$ is the block-diagonal version of $\hat{H}_{\Gamma\Gamma}^{(i)}$ by breaking the connection between subdomain vertices and edges.

Then, we should also consider the following generalized eigenvalue problem:
\begin{equation}
\label{gen_eig_R12}
(Q_{1}^{(i)^\perp})^T C_0^{(i)}Q_{1}^{(i)^\perp}\phi^{(i)}_j=\lambda^{(i)}_j (Q_{1}^{(i)^\perp})^TH_{0}^{(i)}Q_{1}^{(i)^\perp}\phi^{(i)}_j,
\end{equation}
where the eigenvectors $\phi^{(i)}_j$ are orthonormal with respect to $(Q_{1}^{(i)^\perp})^TH_{0}^{(i)}Q_{1}^{(i)^\perp}$. We denote $Q_2^{(i)} = [Q_1^{(i)^\perp} \phi^{(i)}_1, Q_1^{(i)^\perp} \phi^{(i)}_2, \ldots, Q_1^{(i)^\perp} \phi^{(i)}_s]$, where the chosen eigenfunctions correspond to the eigenvalues that are smaller than $1-\eta$ or greater than $1 + \eta$. Define the projection operator $\Pi_2^{(i)}=Q_2^{(i)}Q_2^{(i)^T}H_0^{(i)}.$


Notice that $\Pi_1^{(i)} \Pi_2^{(i)} = 0$. Let us define: $\Pi_{Re}^{(i)}=\Pi_1^{(i)}+\Pi_2^{(i)}=Q_{Re}^{(i)}Q_{Re}^{(i)^T}H_0^{(i)}$, where $Q_{Re}^{(i)} = [Q_1^{(i)}, Q_2^{(i)}]$. Then it is straightforward to see that:
\begin{equation*}
(\Pi_{Re}^{(i)})^TH_0^{(i)}(I-\Pi_{Re}^{(i)})=0.
\end{equation*}

Let us define our local preconditioned sesquilinear form ${b}_{P_1}^{(i)}(\cdot,\cdot) : V_0^{(i)} \times V_0^{(i)} \to \mathbb{C}$ as follows:
\begin{equation}
\label{sesq_local1}
    \begin{split}
     {b}_{P_1}^{(i)}(u_{0}^{(i)},v_{0}^{(i)})&=v_{0}^{(i)^H}\Big(\Re B_0^{(i)}+\textbf{i}\Im B_{0}^{(i)}-(I-\Pi_{Re}^{(i)^T})\Re B_0^{(i)}(I-\Pi_{Re}^{(i)})+(I-\Pi_{Re}^{(i)^T})C_{0}^{(i)}(I-\Pi_{Re}^{(i)})\Big)u_{0}^{(i)}\\
&=v_{0}^{(i)^H}\Big(C_0^{(i)}+\textbf{i}\Im B_{0}^{(i)}-D_{Re}^{(i)}\Pi_{Re}^{(i)}-\Pi_{Re}^{(i)^T} D_{Re}^{(i)}(I-\Pi_{Re}^{(i)})\Big)u_{0}^{(i)},
\end{split}
\end{equation}
where $D_{Re}^{(i)}=C_0^{(i)}-\Re B_0^{(i)}$.

The global sesquilinear form ${b}_{P_1}(\cdot, \cdot) : V_0 \times V_0 \to \mathbb{C}$ is then defined as:
\begin{equation*}
\label{sesq_1}
{b}_{P_1}(u_0,v_0)=\sum_{i=1}^N{b}_{P_1}^{(i)}(R_{\Gamma,N}^{(i)}u_0,R_{\Gamma,N}^{(i)}v_0).
\end{equation*}
Consequently, we denote the matrix representation of the global sesquilinear form ${b}_{P_1}(\cdot, \cdot)$ as ${B}_{P_1}$.

Together with the local problems, we define the first preconditioner as:
\begin{equation}
\label{P_ad1}
P_{1}^{-1}=R_0^TB_{P_1}^{-1}R_0+\displaystyle{\sum_{i=1}^N}{R}_i^TQ_P^{(i)}(B_i)^{-1}Q_P^{(i)^T}{R}_i.
\end{equation}

\begin{remark}
Apart from some specialized discretization methods, we emphasize that for $B_0^{(i)}$, 
while $B_{\Gamma I}^{(i)}$, $B_{II}^{(i)}$, and $B_{I\Gamma}^{(i)}$ are real matrices, 
the imaginary part of $\hat{B}_{\Gamma\Gamma}^{(i)}$ is nonzero only when the subdomain 
$\Omega_i$ touches the impedance boundary. We avoid complex generalized eigenvalue problems because the generalized Schur form (QZ algorithm) for a complex matrix may not be diagonalizable.
\end{remark}

\begin{remark}
\label{remark_4.2}
The orthonormal property of the generalized eigenfunctions $\xi_j^{(i)}$ with respect to $H_0^{(i)}$ ensures that $Q^{(i)^T} H_0^{(i)} Q^{(i)}$ is the identity matrix. Furthermore, let $Q_{Re}^{(i)^\perp}$ be the orthogonal complement of $Q_{Re}^{(i)}$ with respect to $H_0^{(i)}$. We have the following properties:
\begin{equation*}
 (1-\eta) v_{\Gamma_i}^T H_{0}^{(i)}v_{\Gamma_i}\leq v_{\Gamma_i}^T\Re B_0^{(i)}v_{\Gamma_i} \leq  (1+\eta) v_{\Gamma_i}^T H_{0}^{(i)}v_{\Gamma_i} \quad \forall v_{\Gamma_i} \in \mbox{Range}(Q_{Re}^{(i)^{\perp}}).
\end{equation*}
The above properties also hold when replacing $\Re B_0^{(i)}$ with $C_0^{(i)}$.
\end{remark}

\begin{remark}
The sesquilinear form \cref{sesq_local1} is in the form of a block diagonal part plus two low-rank perturbation parts. Hence, the globally assembled matrix also takes the form of a block diagonal plus some low-rank perturbation parts. Since $\Im B_0^{(i)}$ appears in the subdomains that touch the impedance boundary condition, the largest block is associated with all the DOFs on the impedance boundary, while the rest of the block is associated with all the DOFs on vertices and edges. The parallelization property is obtained using the Woodbury matrix identity, which we will illustrate at the end of this subsection.
\end{remark}

We note that the size of the global problem in the above method is twice the total number of selected eigenfunctions from both \cref{gen_eig_R11} and \cref{gen_eig_R12}, since the sesquilinear form \cref{sesq_local1} contains two low-rank perturbation parts. Moreover, we need to find all the eigenfunctions of \cref{gen_eig_R11}. We propose a cheaper approach that does not require finding all eigenfunctions; instead, only a small number of eigenvalues are needed. Furthermore, the size of the global problem is just the number of selected eigenfunctions.

\subsection{Preconditioner $P_2^{-1}$}
Instead of considering two generalized eigenvalue problems, we consider the following generalized eigenvalue problem locally:
\begin{equation}
\label{gen_eig_R}
\Re B_0^{(i)}\xi^{(i)}_j=\lambda^{(i)}_j C_{0}^{(i)}\xi^{(i)}_j
\hspace{30pt} (j=1,\cdots, N_i),
\end{equation}
where the eigenvectors $\xi^{(i)}_j$ are orthonormal with respect to $C_{0}^{(i)}$.

We choose the eigenvectors corresponding to eigenvalues smaller than $1-\eta$ and larger than $1 + \eta$ to construct the eigenfunction space $Q = [\xi^{(i)}_1, \cdots, \xi_{K_i}^{(i)}] \in \mathbb{R}^{N_i \times K_i}$. We then define the local projection operators $
\Pi_Q^{(i)}=Q^{(i)}Q^{(i)^T}C_{0}^{(i)}$.

Next, we define the local sesquilinear form ${b}_{P_2}^{(i)}(\cdot, \cdot) : V_0^{(i)} \times V_0^{(i)} \to \mathbb{C}$ as:
\begin{equation*}
    \begin{split}
     {b}_{P_2}^{(i)}(u_{0}^{(i)},v_{0}^{(i)})&=v_{0}^{(i)^H}\big(\Pi_Q^{(i)^T}\Re B_0^{(i)}\Pi_Q^{(i)}+(I-\Pi_Q^{(i)^T})C_{0}^{(i)}(I-\Pi_Q^{(i)})+\textbf{i}\Im B_{0}^{(i)} \big)u_{0}^{(i)}\\
     &=v_{0}^{(i)^H}\big(C^{(i)}_{0}+\textbf{i}\Im B_{0}^{(i)}\!-C_{0}^{(i)}Q^{(i)}D^{(i)}Q^{(i)^T}\!\!C_{0}^{(i)}\big)u_{0}^{(i)},
\end{split}
\end{equation*}
where $D^{(i)} = \text{diagonal}(1 - \lambda_1^{(i)}, \cdots, 1 - \lambda_{K_i}^{(i)}) \in \mathbb{R}^{K_i \times K_i}$. The global sesquilinear form ${b}_{P_2}(\cdot, \cdot)$ is then defined as the sum of all local sesquilinear forms. Then, we obtain the resulting preconditioner as follows:
\begin{equation}
\label{P_ad2}
P_{2}^{-1}=R_0^T(B_{P_2})^{-1}R_0+\displaystyle{\sum_{i=1}^N}{R}_i^TQ_P^{(i)}(B_i)^{-1}Q_P^{(i)^T}{R}_i.
\end{equation}

\subsection{Scalability of the preconditioner}
 We show the scalability of $B_{P_1}^{-1}$. The scalability of $B_{P_2}^{-1}$ is obtained in a similar way. Let $\displaystyle{
    C\!=\!\sum_{i=1}^N} R_{\Gamma,s}^{(i)^T}\big({C}_0^{(i)} +\textbf{i}R_{\Gamma,0}^{(i)^T}\Im B_{0}^{(i)}R_{\Gamma,0}^{(i)}\big)R_{\Gamma,s}^{(i)},$  $U_1\!=\! \displaystyle{\sum_{i=1}^N}R_{\Gamma,s}^{(i)^T}{D}_{Re}^{(i)} Q_{Re}^{(i)}R_{\lambda_i}$, $V_1=\displaystyle{\sum_{i=1}^NR_{\lambda_i}^TQ_{Re}^{(i)^T}H_0^{(i)}R_{\Gamma,s}^{(i)}}$, $U_2\!=\! \displaystyle{\sum_{i=1}^N}R_{\Gamma,s}^{(i)^T}{H}_0^{(i)} Q_{Re}^{(i)}R_{\lambda_i}$, $V_2=\displaystyle{\sum_{i=1}^NR_{\lambda_i}^TQ_{Re}^{(i)^T}D_{Re}^{(i)}(I^{(i)}-Q_{Re}^{(i)}Q_{Re}^{(i)^T}H_0^{(i)})R_{\Gamma,s}^{(i)}}$,  where $R_{\lambda_i}$ is choose the eigenfunctions in i-th subdomain from all chosen eigenspace. Then the matrix of $b_{P_1}(\cdot,\cdot)$ can be written as:
\begin{equation*}   
B_{P_1}=C-U_1V_1-U_2V_2.
\end{equation*}

Since $C^{-1}$ can be computed in parallel, we employ the Woodbury matrix identity (see Appendix \cref{woodbury}) to obtain the explicit expression:
\begin{equation*}
    {B}_{P_1}^{-1}=C^{-1}+C^{-1}[U_1,U_2]M^{-1}\begin{bmatrix}
V_1\\
V_2
   \end{bmatrix}C^{-1},
\end{equation*}
where $M=I-
\begin{bmatrix}
V_1\\
V_2
   \end{bmatrix}C^{-1}[U_1,U_2]$ is a dense matrix.

\begin{remark}
\label{remark4.4}
For the Helmholtz equation with Dirichlet or Neumann boundary conditions, the imaginary part of $B_{\Gamma\Gamma}^{(i)}$ vanishes. For some special discretizations, such as Hybrid Discontinuous Galerkin (HDG) with real penalty parameters, the imaginary part of $B_{\Gamma\Gamma}^{(i)}$ can be constructed as block diagonal. In such cases, $C$ becomes a block diagonal matrix with each block associated only with vertices and edges, thus $C^{-1}$ has an excellent parallel structure. For cases where $\Im B_0^{(i)}$ is a globally assembled matrix, the largest block is associated with all the DOFs on the impedance boundary, while the rest of the smaller blocks are associated with vertices and edges, therefore $C^{-1}$ can also be computed in parallel. Finally, for $B_{P_1}^{-1}$ the size of the global matrix $M$ is twice the number of selected eigenfunctions, while for $B_{P_2}^{-1}$, the size of the global matrix equals the number of selected eigenfunctions.
\end{remark}

We also consider the computational cost associated with each iteration of the algorithm. As for all our proposed preconditioners using $R_0^T$ and $R_0$ for the coarse problem,  it is important to note that the explicit construction of $R_0^T$ and $R_0$ is unnecessary, as their actions are equivalent to solving $N$ local problems.  One of the advantages of using $R_0^T$ and $R_0$ is that, if the residual satisfies $r = R_0^T r_0$, and given that $Q_L^{(i)^T} R_i B r = 0$, we can deduce:
\[
P^{-1} B r = R_0^T B_{P}^{-1} R_0 B r.
\]
This approach is similar to the "residual correction" method discussed in \cite{yu2024family}, which is used for harmonic extension in elliptic problems. Therefore, instead of solving a large preconditioned system for B, we solve a smaller preconditioned system for $B_0$.
The proposed algorithm for the numerical implementation of the preconditioner described in \cref{P_ad1} is as follows:
\begin{algorithm}
\caption{NOSAS Preconditioner for the Helmholtz Equation}
\label{Alg_1}
\begin{algorithmic}
\STATE \textbf{Input:} Matrices $R_i$, $B_{\Gamma I}^{(i)}$, $B_0$, preconditioner $B_{P}$, right-hand side vector $b$.
\STATE \textbf{Output:} Solution $u_h$.

\STATE \textbf{Step 1:} Solve the $N$ local problems to obtain solutions $u_i$ for $1 \leq i \leq N$. Then, multiply each local solution by the corresponding local matrix $-B_{\Gamma I}^{(i)}$ to construct $r_0$, where $r_0=R_0b$.

\STATE \textbf{Step 2:} Solve the preconditioned system:
\[
B_{P}^{-1} B_0 u_0 = B_{P}^{-1} r_0.
\]

\STATE \textbf{Step 3:} Upon obtaining the coarse solution $u_0$ by solving the preconditioned system, compute $R_0^T u_0$, which is equivalent to solving $N$ local problems. The final solution is then given by $u_h = R_0^T u_0 + \displaystyle{\sum_{i=1}^N} R_i^T u_i$.
\end{algorithmic}
\end{algorithm}

In particular, local problems are solved only twice, before iteration to obtain $u_i$ for $1\leq i\leq N$ and $R_0b$, and after iteration to construct $R_0^Tu_0$. During iteration, we solve the preconditioned system of the smaller matrix $B_0$ using an iterative method, rather than the entire matrix $B$. Here, $B_0$ represents the Schur complement for the Helmholtz equation, and $B_P$ denotes the preconditioner for $B_0$. Importantly, no local problems need to be solved during the iteration phase.

\subsection{Well-posedness of the Coarse Problem When $\eta$ Is Close to zero}
The well-posedness of the coarse problem is closely related to the inf-sup condition of the sesquilinear form $b_{P_1}(\cdot,\cdot)$. The following theorem establishes the inf-sup condition of $b_{P_1}(\cdot,\cdot)$ when $\eta$ is close to zero.
\begin{theorem}
\label{inf_sup_tilde_b0}
If $\eta$ is chosen such that $\gamma_1:=\gamma-2\eta>0$, the global sesquilinear form $b_{P_1}(\cdot,\cdot)$ satisfy the following inf-sup condition:
\begin{equation*}
\label{inf_sup_gamma1}
 \inf_{{u}_0\in {V}_0\backslash\{0\}}\sup_{{v}_0\in {V}_0\backslash\{0\}}\frac{|b_{P_1}({u}_0,{v}_0)|}{|||{u}_0|||\hspace{3pt}|||{v}_0|||}\geq \gamma_1.
\end{equation*} 
\end{theorem}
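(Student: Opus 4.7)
The plan is to view $b_{P_1}$ as $b_0$ plus a low-rank perturbation whose size, measured in $|||\cdot|||$, is at most $2\eta$, and then to invoke the inf-sup bound of $b_0$ from \Cref{infsup_b_0} together with the reverse triangle inequality to produce the required test vector. So the heart of the argument is a single perturbation estimate; everything else is bookkeeping.

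Expanding \cref{sesq_local1} and using $B_0^{(i)} = \Re B_0^{(i)} + \mathbf{i}\Im B_0^{(i)}$ gives the identity
\begin{equation*}
b_{P_1}(u_0,v_0) - b_0(u_0,v_0) = \sum_{i=1}^N (R_{\Gamma,S}^{(i)}v_0)^H E^{(i)} (R_{\Gamma,S}^{(i)}u_0), \qquad E^{(i)} := (I - \Pi_{Re}^{(i)^T}) D_{Re}^{(i)} (I - \Pi_{Re}^{(i)}),
\end{equation*}
with $D_{Re}^{(i)} = C_0^{(i)} - \Re B_0^{(i)}$ real symmetric. Since $Q_{Re}^{(i)^T}H_0^{(i)}Q_{Re}^{(i)} = I$, the operator $\Pi_{Re}^{(i)}$ is the $H_0^{(i)}$-orthogonal projection onto $\Range(Q_{Re}^{(i)})$, which yields both $(I-\Pi_{Re}^{(i)})w \in \Range(Q_{Re}^{(i)^\perp})$ for every $w$ and the contraction $(I-\Pi_{Re}^{(i)})^H H_0^{(i)}(I-\Pi_{Re}^{(i)}) \preceq H_0^{(i)}$. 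Applying \Cref{remark_4.2} to both $\Re B_0^{(i)}$ and $C_0^{(i)}$ on $\Range(Q_{Re}^{(i)^\perp})$ and subtracting the two-sided bounds gives $|z^H D_{Re}^{(i)} z| \leq 2\eta\, z^H H_0^{(i)} z$ for every $z \in \Range(Q_{Re}^{(i)^\perp})$, and combining with the contraction delivers the quadratic estimate $|w^H E^{(i)} w| \leq 2\eta\, w^H H_0^{(i)} w$ for all $w$.

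The one delicate point is converting this quadratic bound into a bilinear one without losing the constant. Since $E^{(i)}$ is Hermitian and $H_0^{(i)}$ is positive definite, $(H_0^{(i)})^{-1} E^{(i)}$ is self-adjoint in the $H_0^{(i)}$-inner product, so its spectral radius coincides with its numerical radius and is bounded by $2\eta$; Cauchy-Schwarz in this inner product then yields $|v^H E^{(i)} u| \leq 2\eta \sqrt{v^H H_0^{(i)} v}\sqrt{u^H H_0^{(i)} u}$. Summing over $i$, applying Cauchy-Schwarz on the subdomain index, and invoking the identity $|||w_0|||^2 = \sum_i (R_{\Gamma,S}^{(i)} w_0)^H H_0^{(i)} (R_{\Gamma,S}^{(i)} w_0)$ recorded immediately after \cref{H_0}, we conclude $|b_{P_1}(u_0, v_0) - b_0(u_0, v_0)| \leq 2\eta\, |||u_0|||\, |||v_0|||$.

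To close, fix $u_0 \in V_0$ and use \Cref{infsup_b_0} to pick $v_0 \in V_0$ with $|b_0(u_0, v_0)| \geq \gamma|||u_0|||\,|||v_0|||$; the reverse triangle inequality combined with the perturbation bound then gives $|b_{P_1}(u_0, v_0)| \geq (\gamma - 2\eta)|||u_0|||\,|||v_0|||$, which is the claimed inf-sup inequality with constant $\gamma_1$. The main obstacle, as flagged above, is the constant-sharp passage from the quadratic to the bilinear bound on $E^{(i)}$: a naive polarization argument would cost an extra factor and force the stricter requirement $\eta < \gamma/4$, so exploiting the $H_0^{(i)}$-self-adjointness of $(H_0^{(i)})^{-1}E^{(i)}$ is what makes the stated threshold $\eta < \gamma/2$ work.
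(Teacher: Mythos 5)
Your proof is correct and the top-level strategy is exactly the paper's: establish the perturbation estimate $|b_0(u_0,v_0)-b_{P_1}(u_0,v_0)|\leq 2\eta\,|||u_0|||\,|||v_0|||$, then combine with the inf-sup bound on $b_0$ from \Cref{infsup_b_0} by the triangle inequality. Where you differ is in how the perturbation estimate is obtained. The paper splits $D_{Re}^{(i)} = (C_0^{(i)}-H_0^{(i)}) + (H_0^{(i)}-\Re B_0^{(i)})$ and bounds each piece by $\eta\,|||u_0|||\,|||v_0|||$ by working in an explicit $H_0^{(i)}$-orthonormal basis of $\Range(Q_{Re}^{(i)^\perp})$ that simultaneously diagonalizes $H_0^{(i)}$ and either $C_0^{(i)}$ or $\Re B_0^{(i)}$, and then does a hands-on Cauchy--Schwarz in those coordinates. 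You instead bound $D_{Re}^{(i)}$ as a single $2\eta$-perturbation of $H_0^{(i)}$ on $\Range(Q_{Re}^{(i)^\perp})$ (subtracting the two-sided estimates of \Cref{remark_4.2} applied to both $\Re B_0^{(i)}$ and $C_0^{(i)}$), and pass from the quadratic to the bilinear bound by the abstract observation that $(H_0^{(i)})^{-1}E^{(i)}$ is $H_0^{(i)}$-self-adjoint, so its numerical radius equals its $H_0^{(i)}$-operator norm and the constant is not lost. Both routes deliver the same constant $2\eta$; your operator-norm argument is more conceptual and avoids constructing the simultaneous eigenbasis, while the paper's explicit computation makes the constant-sharpness visible without invoking the abstract spectral fact. (One small note: a naive polarization argument at the local level actually still yields the constant $2\eta$ after normalizing $|||u_0|||=|||v_0|||=1$, which the inf-sup formulation permits; so the self-adjointness route is cleaner but not strictly necessary to save the constant.)
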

\begin{proof}
We first show that 
\begin{equation}
\label{diffb0bp1}
    |b_0(u_0,v_0)-b_{P_1}(u_0,v_0)| \leq 2\eta|||{u}_0|||\hspace{3pt}|||{v}_0|||.
\end{equation}

Using the definition of sesquilinear form \cref{sesq_local1}, we note that
\begin{equation*}
\begin{split}
     |b_0(u_0,v_0)&-b_{P_1}(u_0,v_0)|=\Big|\sum_{i=1}^N v_0^HR_{\Gamma,N}^{(i)^T}(I-\Pi_{Re}^{(i)^T})(C_{0}^{(i)}-\Re B_0^{(i)})(I-\Pi_{Re}^{(i)}) R_{\Gamma,N}^{(i)}u_0\Big|\\
     &=\Big|\sum_{i=1}^N v_0^HR_{\Gamma,N}^{(i)^T}(I-\Pi_{12}^{(i)^T})(C_{0}^{(i)}-H_0^{(i)}+H_0^{(i)}-\Re B_0^{(i)})(I-\Pi_{Re}^{(i)}) R_{\Gamma,N}^{(i)}u_0\Big|\\
     &\leq \textcircled{1}+\textcircled{2},
\end{split}
\end{equation*}
where $\textcircled{1}:= \Big|\displaystyle{\sum_{i=1}^N} v_0^HR_{\Gamma,N}^{(i)^T}(I-\Pi_{12}^{(i)^T})(C_{0}^{(i)}-H_0^{(i)})(I-\Pi_{Re}^{(i)}) R_{\Gamma,N}^{(i)}u_0\Big|$ and $\textcircled{2}:=\Big|\displaystyle{\sum_{i=1}^N} v_0^HR_{\Gamma,N}^{(i)^T}(I-\Pi_{12}^{(i)^T})(H_0^{(i)}-\Re B_0^{(i)})(I-\Pi_{Re}^{(i)}) R_{\Gamma,N}^{(i)}u_0\Big|$.

We first show that $\textcircled{1}\leq \eta|||u_0|||\,|||v_0|||$. Consider $V_h(\Gamma_i)=Q_{Re}^{(i)}\oplus Q_{Re}^{(i)^\perp}$, which are orthogonal with respect to $H_0^{(i)}$. And we denote $R_{\Gamma,N}^{(i)}u_0=u_{0}^{(i)}:=u_1+u_2$, where $u_1\in Q_{Re}^{(i)}$ and $u_2\in Q_{Re}^{(i)^\perp}$. Similarly,  $R_{\Gamma,N}^{(i)}v_0=v_{0}^{(i)}:=v_1+v_2$ where $v_1\in Q^{(i)}$ and $v_2\in Q^{(i)^\perp}$. Let $\varphi_1,\cdots,\varphi_{K}$ be any set of the normalized basis vectors of $Q_{Re}^{(i)}$ with respect to $H_0^{(i)}$, and $\varphi_{K+1},\cdots,\varphi_{n}$ be the normalized basis of $Q_{Re}^{(i)^\perp}$ with respect to $H_0^{(i)}$ and orthogonal with $C_0^{(i)}$. i.e.,
\begin{equation*}[\varphi_{K+1},\cdots,\varphi_{n}]^TH_0^{(i)}[\varphi_{K+1},\cdots,\varphi_{n}]=I,\,\,[\varphi_{K+1},\cdots,\varphi_{n}]^TC_0^{(i)}[\varphi_{K+1},\cdots,\varphi_{n}]=\Lambda,
\end{equation*}
where $\Lambda$ is a diagonal matrix with eigenvalue  $\lambda_{K+1},\cdots, \lambda_{n}$ on the diagonal.  Using \cref{remark_4.2}, we note that $1-\eta\leq \lambda_{K+1}\leq \cdots\leq \lambda_{n}\leq (1+\eta)$.  

Let $u_0^{(i)}=\alpha_1\varphi_1+\alpha_2\varphi_2+\cdots \alpha_n\varphi_n$ and $v_0^{(i)}=\beta_1\varphi_1+\beta_2\varphi_2+\cdots \beta_n\varphi_n$. Then, we have
\begin{equation*}
\begin{split}
    \textcircled{1}&={\sum_{i=1}^N}\Big|v_{0}^{(i)^H}(I-\Pi_Q^{(i)^T})(C_{0}^{(i)}-H_0^{(i)})(I-\Pi_Q^{(i)})u_{0}^{(i)}\Big|\\
    &={\sum_{i=1}^N}|(1-\lambda_{K+1})\alpha_{K+1}\beta_{K+1}+\cdots +(1-\lambda_{n})\alpha_{n}\beta_{n}|\\
    &\leq {\sum_{i=1}^N}|\alpha_{K+1}^2+\cdots+\alpha_{n}^2|^{1/2}|(1-\lambda_{K+1})^2\beta_{K+1}^2+\cdots+(1-\lambda_{n})^2\beta_{n}^2|^{1/2}\\
    &\leq \eta {\sum_{i=1}^N}|\alpha_{K+1}^2+\cdots+\alpha_{n}^2|^{1/2}|\beta_{K+1}^2+\cdots+\beta_{n}^2|^{1/2}\\
    &=\eta {\sum_{i=1}^N}(u_2^T H_0^{(i)}u_2)^{1/2}(v_2^T H_0^{(i)}v_2)^{1/2}\leq \eta|||u_0|||\,|||v_0|||.
    \end{split}
\end{equation*}

Similarly, we can show that $\textcircled{2} \leq \eta |||u_0||| \, |||v_0|||$, except that we use $\phi_{K+1}, \ldots, \phi_n$ as the normalized basis of $Q_{Re}^{(i)^\perp}$ with respect to $\Re B_0^{(i)}$ and $H_0^{(i)}$.

Next, using the inf-sup condition of $b_0(\cdot,\cdot)$ in \cref{infsup_b_0}, we have:
$$|{b}_0({u}_0,{v}_0)|\geq \gamma |||{u}_0|||\hspace{3pt}|||{v}_0|||.$$

Combining the above result with the triangle inequality, we obtain the desired result.
\end{proof}

The next theorem provides the key bounds for the preconditioned system of the Helmholtz equation.

\begin{theorem}
\label{I-P_B} 
For any $v_h\in V_h$, we have 
$$||(I-P_{1}^{-1}B)v_h||_{\mathcal{H}}\leq C(\eta,\gamma_1)||v_h||_{\mathcal{H}},$$
where $C(\eta,\gamma_1)=\frac{2\eta}{\gamma_1},$ and $I:V_h\to V_h$ is the identity mapping matrix.
\end{theorem}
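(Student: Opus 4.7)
My plan is to show that $(I-P_1^{-1}B)v_h$ reduces to a purely coarse-space vector $R_0^T w_0$, and then bound its $\mathcal{H}$-norm by combining the inf-sup estimate for $b_{P_1}$ from \cref{inf_sup_tilde_b0}, the near-identity bound \cref{diffb0bp1}, and the global inf-sup condition of $b(\cdot,\cdot)$.

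\textbf{Step 1: Reduction to a coarse error.} By \cref{infsup_b_0}, $v_h$ admits the unique decomposition $v_h=R_0^Tu_0+\sum_{i=1}^N R_i^T u_i$ with $u_0\in V_0$ and $u_i=Q_L^{(i)}\alpha_L^{(i)}\in V_i$. Using that the interior block $B_{II}$ is block diagonal across subdomains, the $B_{II}^{(i)}$-orthogonality of $Q_L^{(i)}$ and $Q_S^{(i)}$ inherited from \cref{gen_eig_R00}, and the identity $B_L^{(i)}B_{II}^{(i)}Q_L^{(i)}=Q_L^{(i)}$, a direct calculation shows that $R_0Bv_h=B_0u_0$ and $Q_P^{(i)^T}R_iBv_h=B_i\alpha_L^{(i)}$. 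Substituting these into the definition \cref{P_ad1} of $P_1^{-1}$ yields
\[
P_1^{-1}Bv_h = R_0^T B_{P_1}^{-1}B_0u_0 + \sum_{i=1}^N R_i^T u_i,\qquad\text{hence}\qquad (I-P_1^{-1}B)v_h = R_0^T w_0,
\]
where $w_0:=(I-B_{P_1}^{-1}B_0)u_0\in V_0$.

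\textbf{Step 2: Bounds on $|||w_0|||$ and $|||u_0|||$.} Since $B_{P_1}w_0=(B_{P_1}-B_0)u_0$, we have $b_{P_1}(w_0,\tilde v_0) = (b_{P_1}-b_0)(u_0,\tilde v_0)$ for every $\tilde v_0\in V_0$. The inf-sup estimate of \cref{inf_sup_tilde_b0} combined with \cref{diffb0bp1} then yields
\[
|||w_0|||\le \frac{2\eta}{\gamma_1}|||u_0|||.
\]
Moreover, since $\mathcal{H}_0^T u_0$ and $v_h$ coincide on $\Gamma$ and share the same $Q_S^{(i)}$-component in every subdomain while $\mathcal{H}_0^T u_0$ realizes the minimum $\mathcal{H}$-energy among all such extensions, we get $|||u_0|||=\|\mathcal{H}_0^T u_0\|_{\mathcal{H}}\le \|v_h\|_{\mathcal{H}}$.

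\textbf{Step 3: Lifting to $\|R_0^T w_0\|_{\mathcal{H}}$.} The hard part is that $R_0^T$ is a discrete $B$-harmonic-type extension on each $V_i$ rather than a minimum-$\mathcal{H}$-energy extension, so $|||\cdot|||$ does not a priori control $\|R_0^T\cdot\|_{\mathcal{H}}$. I bypass this via the global inf-sup condition of $b(\cdot,\cdot)$ on $V_h$. For any $\tilde v\in V_h$ with unique decomposition $\tilde v=R_0^T\tilde v_0+\sum_i R_i^T\tilde v_i$, the orthogonality $b(R_0^T w_0, R_i^T\tilde v_i)=0$—a consequence of the same algebraic identities used in Step 1—collapses the pairing to $b(R_0^T w_0,\tilde v)=b_0(w_0,\tilde v_0)$. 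Splitting $b_0(w_0,\tilde v_0)=b_{P_1}(w_0,\tilde v_0)-(b_{P_1}-b_0)(w_0,\tilde v_0)$, using $b_{P_1}(w_0,\tilde v_0)=(b_{P_1}-b_0)(u_0,\tilde v_0)$, and applying \cref{diffb0bp1} twice bounds the pairing by $2\eta(|||w_0|||+|||u_0|||)\,|||\tilde v_0|||$. Since $|||\tilde v_0|||\le\|\tilde v\|_{\mathcal{H}}$, the inf-sup of $b$ produces
\[
\|R_0^T w_0\|_{\mathcal{H}} \le \frac{2\eta}{\gamma}\bigl(|||w_0|||+|||u_0|||\bigr).
\]
Inserting the Step 2 estimates and using $\gamma=\gamma_1+2\eta$ telescopes this to $\frac{2\eta}{\gamma_1}\|v_h\|_{\mathcal{H}}$, which is exactly the claimed bound.
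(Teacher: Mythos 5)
Your proof is correct and follows the same route as the paper: it reduces $(I-P_1^{-1}B)v_h$ to the coarse error $R_0^T w_0$ via the exactness of the local solves, and then combines the inf-sup estimate of $b_{P_1}$ from \cref{inf_sup_tilde_b0} with the perturbation bound \cref{diffb0bp1}. Your Step~3 makes explicit the lifting argument---using the $b$-orthogonality of $R_0^T$ to the local spaces and the global inf-sup of $b$ to pass from $|||w_0|||$ to $\|R_0^T w_0\|_{\mathcal{H}}$---that the paper compresses into a single citation-laden inequality, and the algebraic identity $\tfrac{2\eta}{\gamma}\bigl(1+\tfrac{2\eta}{\gamma_1}\bigr)=\tfrac{2\eta}{\gamma_1}$ you telescope is exactly what makes the paper's stated constant come out.
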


\begin{proof}
First, notice that  $I=R_0^T({B}_0)^{-1}R_0B+\displaystyle{\sum_{i=1}^N}{R}_i^TQ_P^{(i)}(B_i)^{-1}Q_P^{(i)^T}{R}_iB$, so 
\begin{equation}
\label{I-PB}
    I-P_{1}^{-1}B=R_0^T({B}_0)^{-1}R_0B-R_0^T({B}_{P_1})^{-1}R_0B.
\end{equation}

Let  $v_h=\begin{bmatrix}
  v_{\Gamma}    \\
    v_{I}     
\end{bmatrix}=\begin{bmatrix}v_\Gamma \\ \displaystyle{\sum_{i=1}^NR_{I_iI}^TQ_L^{(i)}\alpha_L^{(i)}+\sum_{i=1}^NR_{I_iI}^TQ_S^{(i)}\alpha_S^{(i)}}
\end{bmatrix}\in V_h$,  then we have the unique decomposition $v_h=R_0^T{v}_0+\displaystyle{\sum_{i=1}^N}R_i^Tv_i,$ where ${v}_0= \begin{bmatrix}
  v_{\Gamma}    \\
    \alpha_S
\end{bmatrix}\in {V}_0$  and 
\begin{equation*}
    v_i=B_L^{(i)}(B_{I\Gamma}^{(i)}R_{\Gamma_i\Gamma}v_\Gamma+B_{II}^{(i)}R_{I_iI}v_I)\in V_i \quad \text{ for }  1\leq i \leq N.
\end{equation*}

In order to bound $(I - P_1^{-1} B)(R_0^T v_0 + \sum_{i=1}^N R_i^T v_i)$, we first consider the bound for $(I - P_1^{-1} B)R_0^T v_0$. Using the property of $B_0$, we have:
\begin{equation*}
R_0^T({B}_0)^{-1}R_0B R_0^T{v}_0=R_0^T({B}_0)^{-1}B_0
{v}_0=R_0^T{v}_0,
\end{equation*}
and also 
\begin{equation*}
R_0^T({B}_{P_1})^{-1}R_0B R_0^T{v}_0=R_0^T({B}_{P_1})^{-1}B_0
{v}_0.
\end{equation*}

Together with \cref{I-PB}, \cref{diffb0bp1} and \cref{infsup_b_0}, we have that 
\begin{equation}
\label{H-R}
\begin{split}
||(I-P_{1}^{-1}B)R_0^T{v}_0||_{\mathcal{H}}&=||R_0^T{v}_0-R_0^T({B}_{P_1})^{-1}{B}_0{v}_0||_{\mathcal{H}}=||R_0^T({B}_{P_1})^{-1}(B_{P_1}-{B}_0){v}_0||_{\mathcal{H}}\\
&\leq\frac{2\eta}{\gamma_1}|||{v}_0|||\leq \frac{2\eta}{\gamma_1}||v_h||_{\mathcal{H}}.
\end{split} 
\end{equation}
  
So, it only reminds to bound $(I-P_{1}^{-1}B)\begin{bmatrix}0 \\ \displaystyle{\sum_{i=1}^NR_{I_iI}^Tv_i}
\end{bmatrix}$. Notice that   $R_0B\begin{bmatrix}0 \\ \displaystyle{\sum_{i=1}^NR_{I_iI}^Tv_i}
\end{bmatrix}=\vec{0},$  then we have:
\begin{equation*}
\begin{split}
(I-P_{1}^{-1}B)\begin{bmatrix}0 \\ \displaystyle{\sum_{i=1}^NR_{I_iI}^Tv_i}
\end{bmatrix}&=\vec{0}.
\end{split}
\end{equation*}

Finally, together with \cref{H-R}, we have that
$$||(I-P_{1}^{-1}B)v_h||_{\mathcal{H}}\leq \frac{2\eta}{\gamma_1}||v_h||_{\mathcal{H}}.$$

\end{proof}

Next, using \cref{I-P_B}, we obtain the following bound on the preconditioned system.
\begin{theorem}
\label{exact_theom}
For the upper bound of the preconditioned system, we have:
$$\max_{v_h\in V_h}\frac{||P_{1}^{-1}Bv_h||_{\mathcal{H}}}{||v_h||_{\mathcal{H}}}\leq 1+C(\eta,\gamma_1).$$
Moreover, when $C(\eta,\gamma_1)=\frac{2\eta}{\gamma_1}<1$, the lower bound of the preconditioned system satisfies:
$$ \min_{v_h\in V_h}\frac{|(v_h,P_{1}^{-1}Bv_h)_{\mathcal{H}}|}{||v_h||_{\mathcal{H}}^2}\geq 1-C(\eta,\gamma_1).$$
\end{theorem}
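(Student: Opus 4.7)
The plan is to reduce both bounds to direct consequences of \Cref{I-P_B}, which already supplies $\|(I-P_1^{-1}B)v_h\|_{\mathcal{H}} \le C(\eta,\gamma_1)\|v_h\|_{\mathcal{H}}$. Since the result to prove is essentially a stability bound ($\|P_1^{-1}B\|$) and a coercivity bound ($\inf |(v_h, P_1^{-1}B v_h)_{\mathcal{H}}|/\|v_h\|_{\mathcal{H}}^2$) for the operator $P_1^{-1}B$, the identity $P_1^{-1}B = I - (I-P_1^{-1}B)$ is the natural starting point.

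For the upper bound, I would write $P_1^{-1}B v_h = v_h - (I-P_1^{-1}B)v_h$ and apply the triangle inequality in the $\|\cdot\|_{\mathcal{H}}$-norm. This gives $\|P_1^{-1}Bv_h\|_{\mathcal{H}} \le \|v_h\|_{\mathcal{H}} + \|(I-P_1^{-1}B)v_h\|_{\mathcal{H}} \le (1+C(\eta,\gamma_1))\|v_h\|_{\mathcal{H}}$ by \Cref{I-P_B}, which is exactly the claimed bound after dividing by $\|v_h\|_{\mathcal{H}}$ and taking the supremum.

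For the lower bound, I would expand the $\mathcal{H}$-inner product via the same splitting:
\begin{equation*}
(v_h, P_1^{-1}B v_h)_{\mathcal{H}} \;=\; (v_h,v_h)_{\mathcal{H}} \;-\; (v_h, (I-P_1^{-1}B)v_h)_{\mathcal{H}}.
\end{equation*}
The first term equals $\|v_h\|_{\mathcal{H}}^2$. Applying the Cauchy--Schwarz inequality to the second term and invoking \Cref{I-P_B} gives the upper bound $C(\eta,\gamma_1)\|v_h\|_{\mathcal{H}}^2$ on its magnitude. Then the reverse triangle inequality for the modulus of a complex number yields
\begin{equation*}
\bigl|(v_h, P_1^{-1}B v_h)_{\mathcal{H}}\bigr| \;\ge\; \|v_h\|_{\mathcal{H}}^2 - C(\eta,\gamma_1)\|v_h\|_{\mathcal{H}}^2 \;=\; \bigl(1-C(\eta,\gamma_1)\bigr)\|v_h\|_{\mathcal{H}}^2,
\end{equation*}
and the assumption $C(\eta,\gamma_1)<1$ ensures the right-hand side is strictly positive, so dividing by $\|v_h\|_{\mathcal{H}}^2$ and taking the infimum produces the claimed lower bound.

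I expect no serious obstacle: all the analytical work has been absorbed into \Cref{inf_sup_tilde_b0} and \Cref{I-P_B}, and the present theorem is essentially a corollary obtained by triangle/Cauchy--Schwarz inequalities applied to $I - (I-P_1^{-1}B)$. The only small care point is that $(\cdot,\cdot)_{\mathcal{H}}$ is a complex (sesquilinear) inner product, so I must work with the modulus $|\cdot|$ throughout rather than the real part, which the statement already accommodates.
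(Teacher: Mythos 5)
Your proof is correct and follows the same route as the paper: both bounds come from the identity $P_1^{-1}B = I - (I-P_1^{-1}B)$ combined with \Cref{I-P_B}, the triangle inequality for the upper bound, and Cauchy--Schwarz plus the reverse triangle inequality (for the modulus) for the lower bound. You spell out the rearrangement that the paper abbreviates, but the underlying argument is identical.
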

\begin{proof}
Notice that $||P_{1}^{-1}Bv_h||_{\mathcal{H}}\leq ||v_h||_{\mathcal{H}}+||(I-P_{1}^{-1}B)v_h||_{\mathcal{H}}$, the upper bound follows directly from \cref{I-P_B}.

For the lower bound, using the Cauchy–Schwarz inequality and \Cref{I-P_B}, we get:
$$ (v_h,(I-P_{1}^{-1}B)v_h)_{\mathcal{H}}\leq C(\eta,\gamma_1)||v_h||_\mathcal{H}^2.$$
Then, the lower bound is easily obtained by moving $C(\eta,\gamma_1)||v_h||_\mathcal{H}$ to the left-hand side and moving $(v_h,P_{1}^{-1}Bv_h)_{\mathcal{H}}$ to the right-hand side.
\end{proof}

\begin{remark}
All the above theorems hold for $P_2^{-1}$ with the constant $2$ replaced by $C_{max} = O(H/h)$, where $C_{max}$ is a constant such that
$C_0^{(i)} \leq C_{max} H_0^{(i)}$.
\end{remark}

\section{NOSAS Preconditioners for $\Re B_0$ and $\Im B_0$}
\label{section_5}
From \cref{remark4.4}, the scalability of $C^{-1}$ depends on the imaginary part of the linear system. For the Helmholtz equation subject to impedance boundary conditions, the imaginary part of the matrix $B_{\Gamma\Gamma}^{(i)}$ typically consists of a large block matrix, except in specific cases such as the HDG method with real penalty parameters, where it is already in block diagonal structure. To obtain better scalability, we propose a new preconditioner that separately handles both the real and imaginary parts, while still ensuring robust convergence for the preconditioned system. In this section, we focus primarily on the linear finite element space and present the corresponding results. However, the construction for other discretizations is similar, since all the construction is entirely algebraic, and the theoretical proofs are similar.

For the real part, we consider the construction for $b_{P_1}(\cdot, \cdot)$. Specifically, we consider generalized eigenvalue problems in \cref{gen_eig_R11} and \cref{gen_eig_R12}. We set a threshold $\eta_{Re} \in (0,1)$ and select eigenvalues that are smaller than $1-\eta_{Re}$ or greater than $1+\eta_{Re}$. We denote the corresponding eigenvector space as ${Q}_{Re}^{(i)} = [\xi_{Re_1}^{(i)}, \xi_{Re_2}^{(i)}, \cdots, \xi_{Re_{{K}_i}}^{(i)}]$, and we denote $\Pi_{Re}^{(i)} = Q_{Re}^{(i)} Q_{Re}^{(i)^T} H_0^{(i)}$.

To develop better parallelization properties, we also consider the imaginary part of $B_0^{(i)}$, which is exactly $\Im B_{\Gamma\Gamma}^{(i)}$ in the linear finite element space. Let us consider the following generalized eigenvalue problem:
\begin{equation}
\label{gen_eig_I1}
\Im B_{\Gamma\Gamma}^{(i)}\xi_{Im_j}^{(i)}={\lambda}_{Im_j}^{(i)} hk S_{\Pi\Pi}^{(i)}{\xi}_{Im_j}^{(i)}
\hspace{30pt} (j=1,\cdots, \hat{N}_i),
\end{equation}
where the eigenvectors $\xi_{Im_j}^{(i)}$ are orthonormal with respect to $S_{\Pi\Pi}^{(i)}$. Here $S_{\Pi\Pi}^{(i)} = H_{\Pi\Pi}^{(i)} - H_{\Pi R}^{(i)} (H_{RR}^{(i)})^{-1} H_{R\Pi}^{(i)}$ is the minimum $\mathcal{H}^{(i)}$-energy extension from $V_h(\Pi_i)$ to $V_h(\Omega_i)$, where $\Pi_i:=\partial\Omega_i\bigcap \partial\Omega$, $R_i:=I_i\bigcup (\partial\Omega_i\backslash \partial\Omega)$ and $\hat{N}_i$ is the number of DOFs on $\Gamma_i \cap \partial \Omega$. For a threshold $\eta_{Im} \in (0,1)$, we denote $\hat{Q}_1^{(i)} = [\xi_{Im_1}^{(i)}, \xi_{Im_2}^{(i)}, \cdots, \xi_{Im_{\hat{k}}}^{(i)}]$, where the chosen eigenfunctions correspond to the eigenvalues that are smaller than $1-\eta_{Im}$ or greater than $1+\eta_{Im}$. Let $\hat{Q}_{1}^{(i)^\perp} = [\xi_{Im_{\hat{k}+1}}^{(i)}, \cdots, \xi_{Im_{\hat{N}_i}}^{(i)}]$, which are the eigenfunctions corresponding to the eigenvalues between $1-\eta_{Im}$ and $1+\eta_{Im}$. 

We should also consider the following generalized eigenvalue problem:
\begin{equation}
\label{gen_eig_I2}
(\hat{Q}_{1}^{(i)^\perp})^T {C}_{\text{Diag}}^{(i)}\hat{Q}_{1}^{(i)^\perp}\phi^{(i)}_{Im_j}=\lambda^{(i)}_{Im_j} (\hat{Q}_{1}^{(i)^\perp})^T hkS_{\Pi\Pi}^{(i)}\hat{Q}_{1}^{(i)^\perp}\phi^{(i)}_{Im_j},
\end{equation}
where $C_{\text{Diag}}^{(i)}$ is the diagonal or block-diagonal form of $\Im B_{\Gamma\Gamma}^{(i)}$. We denote $\hat{Q}_2^{(i)} = [\hat{Q}_{1}^{(i)^\perp} \phi_{Im_1}^{(i)}, \hat{Q}_{1}^{(i)^\perp} \phi_{Im_2}^{(i)}, \cdots, \hat{Q}_{1}^{(i)^\perp} \phi_{Im_{\hat{s}}}^{(i)}]$, where the chosen eigenfunctions correspond to the eigenvalues that are smaller than $1-\eta_{Im}$ or greater than $1+\eta_{Im}$. We denote $Q_{Im}^{(i)} = [\hat{Q}_1^{(i)}, \hat{Q}_2^{(i)}]$, and $\Pi_{Im}^{(i)} = Q_{Im}^{(i)} Q_{Im}^{(i)^T} S_{\Pi\Pi}^{(i)}$.

\begin{remark}
On the right-hand side of the generalized eigenvalue problems in \cref{gen_eig_I1} and \cref{gen_eig_I2}, there is a constant $hk$ on the right-hand side. This is because in the linear finite element space we have $\Im B_{\Gamma\Gamma}^{(i)} \leq C_h hk A_{\Pi\Pi}^{(i)}$, where $A_{\Pi\Pi}^{(i)}$ is the $\Pi_i$ part of the stiffness matrix $A^{(i)}$, and $C_h = O(1)$ is a constant that depends on the shape of the elements. Therefore, we have:
\begin{equation}
 \Im B_{\Gamma\Gamma}^{(i)}\leq C_hkh H_{\Pi\Pi}^{(i)} \leq C_hC_{max}hkS_{\Pi\Pi}^{(i)},
\end{equation}
where $C_{max}=O(H/h)$ is the constant such that $ H_{\Pi\Pi}^{(i)}\leq C_{max}S_{\Pi\Pi}^{(i)}.$
\end{remark}

We then define a sesquilinear form ${b}_{P_3}(\cdot, \cdot): V_0 \times V_0 \to \mathbb{C}$ as follows:
\begin{equation*}
\label{sesq_bp3}
\begin{split}
    {b}_{P_3}^{(i)}(u_{0}^{(i)},v_{0}^{(i)})&=v_{0}^{(i)^H}\Big(\Re B_0^{(i)}+\textbf{i}\Im B_{0}^{(i)}-(I-\Pi_{Re}^{(i)^T})\Re B_0^{(i)}(I-\Pi_{Re}^{(i)})+(I-\Pi_{Re}^{(i)^T})C_{0}^{(i)}(I-\Pi_{Re}^{(i)})\cdots\\
    &-(I-\Pi_{Im}^{(i)^T})\textbf{i}\Im B_0^{(i)}(I-\Pi_{Im}^{(i)})+(I-\Pi_{Im}^{(i)^T})\textbf{i}C_\text{Diag}^{(i)}(I-\Pi_{Im}^{(i)})\Big)u_{0}^{(i)}\\
&=v_{0}^{(i)^H}\Big(C_0^{(i)}+\textbf{i}C_\text{Diag}^{(i)}-D_{Re}^{(i)}\Pi_{Re}^{(i)}-\Pi_{Re}^{(i)^T} D_{Re}^{(i)}(I-\Pi_{Re}^{(i)})-\textbf{i}D_{Im}^{(i)}\Pi_{Im}^{(i)}\cdots\\&-\textbf{i}\Pi_{Im}^{(i)^T} D_{Im}^{(i)}(I-\Pi_{Im}^{(i)})\Big)u_{0}^{(i)},
   \end{split}
\end{equation*}
where  $D_{Re}^{(i)}=C_0^{(i)}-\Re B_0^{(i)}$, and  $D_{Im}^{(i)}=C_\text{Diag}^{(i)}-\Im B_0^{(i)}$.

The resulting preconditioner can be written as:
\begin{equation*}
\label{P_ad3}
P_{3}^{-1}=R_0^T(B_{P_3})^{-1}R_0+\displaystyle{\sum_{i=1}^N}{R}_i^TQ_P^{(i)}(B_i)^{-1}Q_P^{(i)^T}{R}_i.
\end{equation*}

Similarly to the analysis in \Cref{section_4}, we have the following theorems for the well-posedness of the coarse problem of the second preconditioner.
\begin{theorem}
\label{inf_sup_tilde_b1}
Choosing $\eta$  such that $\gamma_2=\gamma-2\eta_{Re}-2hk\eta_{Im}>0$, the global sesquilinear form $b_{P_3}(\cdot,\cdot)$ satisfy the following inf-sup condition:
\begin{equation*}
 \inf_{{u}_0\in {V}_0\backslash\{0\}}\sup_{{v}_0\in {V}_0\backslash\{0\}}\frac{|b_{P_3}({u}_0,{v}_0)|}{|||{u}_0|||\hspace{3pt}|||{v}_0|||}\geq \gamma_2.
\end{equation*} 
\end{theorem}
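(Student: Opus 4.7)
The plan is to mimic the proof of \Cref{inf_sup_tilde_b0} by first establishing the perturbation bound
\begin{equation*}
|b_0(u_0,v_0)-b_{P_3}(u_0,v_0)| \;\leq\; (2\eta_{Re}+2hk\eta_{Im})\,|||u_0|||\,|||v_0|||,
\end{equation*}
and then invoking the inf-sup condition $|b_0(u_0,v_0)|\geq \gamma\,|||u_0|||\,|||v_0|||$ from \Cref{infsup_b_0} together with the triangle inequality to extract $\gamma_2=\gamma-2\eta_{Re}-2hk\eta_{Im}$.

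First I would split $b_0-b_{P_3}$ into its real and imaginary contributions. Using the second line of the definition of $b_{P_3}^{(i)}$, the real contribution is
\begin{equation*}
\Bigl|\sum_{i=1}^N v_0^{(i)^H}(I-\Pi_{Re}^{(i)^T})(C_0^{(i)}-\Re B_0^{(i)})(I-\Pi_{Re}^{(i)})u_0^{(i)}\Bigr|,
\end{equation*}
which is exactly the quantity estimated in the proof of \Cref{inf_sup_tilde_b0}; that argument (insert $\pm H_0^{(i)}$, split into the $\textcircled{1}+\textcircled{2}$ pieces, diagonalize in a basis simultaneously orthogonal with respect to $H_0^{(i)}$ and $C_0^{(i)}$ or $\Re B_0^{(i)}$, and use $|1-\lambda|\leq \eta_{Re}$ on $\operatorname{Range}(Q_{Re}^{(i)^\perp})$) gives a bound of $2\eta_{Re}|||u_0|||\,|||v_0|||$ without change.

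The imaginary contribution is
\begin{equation*}
\Bigl|\sum_{i=1}^N v_0^{(i)^H}(I-\Pi_{Im}^{(i)^T})(C_{\text{Diag}}^{(i)}-\Im B_0^{(i)})(I-\Pi_{Im}^{(i)})u_0^{(i)}\Bigr|,
\end{equation*}
and this is the step that requires the new ingredient. Following the same strategy, I would insert $\pm hkS_{\Pi\Pi}^{(i)}$ and split the quantity into two pieces involving $C_{\text{Diag}}^{(i)}-hkS_{\Pi\Pi}^{(i)}$ and $hkS_{\Pi\Pi}^{(i)}-\Im B_0^{(i)}$ respectively. In each piece, I would diagonalize on $\operatorname{Range}(Q_{Im}^{(i)^\perp})$ using the generalized eigenvalue problems \cref{gen_eig_I1} and \cref{gen_eig_I2} (whose unselected eigenvalues satisfy $|1-\lambda_{Im}|\leq \eta_{Im}$) and obtain for each piece a bound of the form $\eta_{Im}\cdot hk\cdot (u_0^{(i)^H}S_{\Pi\Pi}^{(i)}u_0^{(i)})^{1/2}(v_0^{(i)^H}S_{\Pi\Pi}^{(i)}v_0^{(i)})^{1/2}$. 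Since $S_{\Pi\Pi}^{(i)}$ is the Schur complement associated with the minimum $\mathcal{H}^{(i)}$-energy extension from $V_h(\Pi_i)$ and since $\mathcal{H}_0^T$ is already the minimum $\mathcal{H}$-energy extension used in the definition of $|||\cdot|||$, one has $(u_0^{(i)^H}S_{\Pi\Pi}^{(i)}u_0^{(i)})^{1/2}\leq |||u_0|||$ (localized to $\Omega_i$), and a Cauchy-Schwarz sum over $i$ gives the imaginary contribution a bound of $2hk\eta_{Im}|||u_0|||\,|||v_0|||$.

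Combining the two bounds yields the perturbation estimate, and the triangle inequality finishes the proof: for every $u_0\in V_0\setminus\{0\}$ pick $v_0$ attaining the inf-sup for $b_0$, so that
\begin{equation*}
|b_{P_3}(u_0,v_0)|\;\geq\;|b_0(u_0,v_0)|-(2\eta_{Re}+2hk\eta_{Im})\,|||u_0|||\,|||v_0|||\;\geq\;\gamma_2\,|||u_0|||\,|||v_0|||.
\end{equation*}
The main obstacle is the imaginary piece: one must justify that $S_{\Pi\Pi}^{(i)}$ (the right-hand side of the imaginary generalized eigenvalue problems restricted to $\Pi_i$) is dominated by the full $H_0^{(i)}$-based norm $|||\cdot|||$ used in the coarse space, and must keep the factor $hk$ balanced so that the perturbation does not exceed $\gamma$. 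Everything else parallels the proof of \Cref{inf_sup_tilde_b0} verbatim.
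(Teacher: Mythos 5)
Your proposal is correct and takes essentially the same approach as the paper, which simply states that the result follows from an argument analogous to that of \Cref{inf_sup_tilde_b0}. You correctly fill in the key ingredient the analogy requires for the imaginary piece: insert $\pm hkS_{\Pi\Pi}^{(i)}$, use the eigenvalue bounds $|1-\lambda_{Im}|\leq\eta_{Im}$ on the unselected eigenspace from \cref{gen_eig_I1} and \cref{gen_eig_I2}, and exploit that the minimum-energy extension from $\Pi_i\subset\Gamma_i$ is dominated by the minimum-energy extension from $\Gamma_i$ (so $(u_0^{(i)}|_{\Pi_i})^H S_{\Pi\Pi}^{(i)} (u_0^{(i)}|_{\Pi_i})\leq u_0^{(i)^H} H_0^{(i)} u_0^{(i)}$) before summing via Cauchy--Schwarz, which is exactly what is needed to obtain the $2hk\eta_{Im}$ contribution.
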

\begin{proof}
     This follows from a similar proof as shown in \Cref{inf_sup_tilde_b0}.
\end{proof}

We also have the following theorem regarding the convergence rate of the preconditioner.
\begin{theorem}
For the upper bound of the preconditioned system, we have:
$$\max_{v_h\in V_h}\frac{||P_{3}^{-1}Bv_h||_{\mathcal{H}}}{||v_h||_{\mathcal{H}}}\leq 1+C(\eta_{Re},\eta_{Im},\gamma_2),$$
where $C(\eta_{Re},\eta_{Im},\gamma_2)=\frac{2\eta_{Re}+2hk\eta_{Im}}{\gamma_2}$. Moreover, if we choose $\eta_{Re}$ and $\eta_{Im}$ such that 
$C(\eta_{Re},\eta_{Im},\gamma_2)<1.$
Then, the lower bound of the preconditioned system satisfies:
$$ \min_{v_h\in V_h}\frac{|(v_h,P_{3}^{-1}Bv_h)_{\mathcal{H}}|}{||v_h||_{\mathcal{H}}^2}\geq 1-C(\eta_{Re},\eta_{Im},\gamma_2).$$
\end{theorem}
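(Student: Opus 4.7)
The plan is to mimic the structure of \Cref{exact_theom} but with the inf-sup constant $\gamma_2$ from \Cref{inf_sup_tilde_b1} in place of $\gamma_1$. The two bounds will both follow once we establish the analog of \Cref{I-P_B}, namely
\[
\|(I-P_3^{-1}B)v_h\|_{\mathcal{H}} \;\leq\; C(\eta_{Re},\eta_{Im},\gamma_2)\,\|v_h\|_{\mathcal{H}} \quad \forall v_h\in V_h.
\]
Assuming this estimate, the upper bound follows from $\|P_3^{-1}Bv_h\|_{\mathcal{H}}\leq \|v_h\|_{\mathcal{H}}+\|(I-P_3^{-1}B)v_h\|_{\mathcal{H}}$, and the lower bound follows by Cauchy--Schwarz applied to $(v_h,(I-P_3^{-1}B)v_h)_{\mathcal{H}}$, exactly as in \Cref{exact_theom}, provided $C(\eta_{Re},\eta_{Im},\gamma_2)<1$.

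To prove the error estimate, I first decompose $v_h=R_0^T v_0 + \sum_{i=1}^N R_i^T v_i$ as in \Cref{infsup_b_0}. Since $R_0 B (R_i^T v_i)=0$ (by construction of $R_0$ and the local problems $B_i$), the component $\sum_i R_i^T v_i$ is annihilated by $I-P_3^{-1}B$, so it suffices to bound $\|(I-P_3^{-1}B) R_0^T v_0\|_{\mathcal{H}}$. Using $R_0^T (B_0)^{-1} R_0 B R_0^T v_0 = R_0^T v_0$ gives
\[
(I-P_3^{-1}B)R_0^T v_0 \;=\; R_0^T(B_{P_3})^{-1}(B_{P_3}-B_0) v_0,
\]
and then, exactly as in the proof of \Cref{I-P_B}, the inf-sup condition of $b_{P_3}(\cdot,\cdot)$ with constant $\gamma_2$ together with $\|\mathcal{H}_0^T v_0\|_{\mathcal{H}}\leq \|R_0^T v_0\|_{\mathcal{H}}\leq \|v_h\|_{\mathcal{H}}$ reduces the task to showing
\[
|b_0(u_0,v_0)-b_{P_3}(u_0,v_0)| \;\leq\; (2\eta_{Re}+2hk\,\eta_{Im})\,|||u_0|||\,|||v_0|||.
\]

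This difference bound is the real work, and it splits cleanly into a real and an imaginary contribution by construction of $b_{P_3}$. The real-part contribution produces $(I-\Pi_{Re}^{(i)^T})(C_0^{(i)}-\Re B_0^{(i)})(I-\Pi_{Re}^{(i)})$ and is handled by copying the $\textcircled{1}+\textcircled{2}$ argument of \Cref{inf_sup_tilde_b0} verbatim, yielding the $2\eta_{Re}$ term. The imaginary-part contribution produces $(I-\Pi_{Im}^{(i)^T})\mathbf{i}(C_{\text{Diag}}^{(i)}-\Im B_0^{(i)})(I-\Pi_{Im}^{(i)})$; here we repeat the same spectral expansion but with respect to the orthogonalization against $hk\,S_{\Pi\Pi}^{(i)}$, so each coefficient pair contributes at most $\eta_{Im}$ in that weighted inner product, producing a factor $hk$ when returning to the $\|\cdot\|_{\mathcal{H}}$ norm via $S_{\Pi\Pi}^{(i)}\leq H_{\Pi\Pi}^{(i)}$ and the minimum-energy extension relating $S_{\Pi\Pi}^{(i)}$ to $H_0^{(i)}$. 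Summed over subdomains, this gives the $2hk\,\eta_{Im}$ term, and adding the two contributions yields $C(\eta_{Re},\eta_{Im},\gamma_2)$.

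The main obstacle I expect is the bookkeeping in the imaginary-part estimate: one must carefully track that $\Pi_{Im}^{(i)}$ is built from two successive generalized eigenproblems \cref{gen_eig_I1}--\cref{gen_eig_I2} exactly mirroring the real case, and then lift $S_{\Pi\Pi}^{(i)}$-weighted inner products to $H_0^{(i)}$-weighted ones to match the $|||\cdot|||$ norm. Once the scaling $\Im B_{\Gamma\Gamma}^{(i)}\lesssim hk\,S_{\Pi\Pi}^{(i)}$ is used to bound the perturbation and the orthogonality $(\Pi_{Im}^{(i)})^T S_{\Pi\Pi}^{(i)} (I-\Pi_{Im}^{(i)})=0$ is invoked, the estimate closes and the theorem follows by the triangle- and Cauchy--Schwarz-based arguments of \Cref{exact_theom}.
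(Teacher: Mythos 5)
Your proposal is correct and follows the paper's intended route: the paper's own proof is a one-line reference to \Cref{exact_theom}, and your argument faithfully unpacks it---establishing $\|(I-P_3^{-1}B)v_h\|_{\mathcal{H}}\le C(\eta_{Re},\eta_{Im},\gamma_2)\,\|v_h\|_{\mathcal{H}}$ via the decomposition $v_h=R_0^Tv_0+\sum_iR_i^Tv_i$, the inf-sup constant $\gamma_2$ from \Cref{inf_sup_tilde_b1}, and the difference bound $|b_0-b_{P_3}|\le(2\eta_{Re}+2hk\eta_{Im})\,|||u_0|||\,|||v_0|||$, then closing by the triangle inequality and Cauchy--Schwarz exactly as in \Cref{exact_theom}. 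Your treatment of the imaginary contribution (eigen-decomposition against the $hkS_{\Pi\Pi}^{(i)}$ weighting and the minimum-energy-extension inequality passing from $S_{\Pi\Pi}^{(i)}$ to $H_0^{(i)}$) correctly produces the $2hk\eta_{Im}$ term.
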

\begin{proof}
     This follows from a similar proof as shown in \Cref{exact_theom}.
\end{proof}

\begin{remark}
For the imaginary part, since $hk$ is very small, $\eta_{Im}$ can be chosen as a small number compared to $\eta_{Re}$. This is verified by numerical experiments, indicating that convergence remains robust with relatively smaller thresholds for the imaginary component.
\end{remark}

We also propose a cheaper preconditioning method similar to $P_2^{-1}$ in \cref{P_ad2}. Let $Q_{Re}^{(i)}$ and the projection operators $\Pi_{Re}^{(i)} = Q_{Re}^{(i)} Q_{Re}^{(i)^T} C_{0}^{(i)}$ be obtained from \cref{gen_eig_R}. We solve the following generalized eigenvalue problems locally for the imaginary part:
\begin{equation}
\label{gen_eig_I}
\Im B_0^{(i)}\xi^{(i)}_j=\lambda^{(i)}_j C_\text{Diag}^{(i)}\xi^{(i)}_j
\hspace{30pt} (j=1,\cdots, n_i),
\end{equation}
where the eigenvectors $\xi^{(i)}_j$ are orthonormal with respect to $C_\text{Diag}^{(i)}$. We denote $Q_{Im}^{(i)}$ as the eigenspace of eigenvalues that are either smaller than $1-\eta_{Im}$ or larger than $1+\eta_{Im}$. The projection operators are defined as $\Pi_{Im}^{(i)} = Q_{Im}^{(i)} Q_{Im}^{(i)^T} C_\text{Diag}^{(i)}$.

Next, we define the local sesquilinear form ${b}_{P_4}^{(i)}(\cdot,\cdot):V_0^{(i)}\times V_0^{(i)}\to \mathbb{C}$ as follows:
\begin{equation*}
    \begin{split}
     {b}_{P_4}^{(i)}(u_{0}^{(i)},v_{0}^{(i)})&=v_{0}^{(i)^H}\big(\Pi_{Re}^{(i)^T}\Re B_0^{(i)}\Pi_{Re}^{(i)}+(I-\Pi_{Re}^{(i)^T})C_{0}^{(i)}(I-\Pi_{Re}^{(i)})\cdots\\
     &+\Pi_{Im}^{(i)^T}\textbf{i}\Im B_0^{(i)}\Pi_{Im}^{(i)}+(I-\Pi_{Im}^{(i)^T})\textbf{i}C_\text{Diag}^{(i)}(I-\Pi_{Im}^{(i)}) \big)u_{0}^{(i)}\\
 &=v_{0}^{(i)^H}\big(C^{(i)}_{0}+\textbf{i}C_\text{Diag}^{(i)}\!-C_{0}^{(i)}Q_{Re}^{(i)}D_{Re}^{(i)}Q_{Re}^{(i)^T}\!\!C_{0}^{(i)}-\textbf{i}C_\text{Diag}^{(i)}Q_{Im}^{(i)}D_{Im}^{(i)}Q_{Im}^{(i)^T}\!\!C_\text{Diag}^{(i)}\big)u_{0}^{(i)},
\end{split}
\end{equation*}
where $D_{Re}^{(i)} = \text{diag}(1 - \lambda_{Re_1}^{(i)}, \cdots, 1 - \lambda_{Re_{K_i}}^{(i)})$, and $D_{Im}^{(i)} = \text{diag}(1 - \lambda_{Im_1}^{(i)}, \cdots, 1 - \lambda_{Im_{\hat{K}_i}}^{(i)})$. The global sesquilinear form ${b}_{P_4}(\cdot, \cdot)$ is then defined as the sum of all local sesquilinear forms. The resulting preconditioner is:
\begin{equation*}
\label{P_ad4}
P_{4}^{-1}=R_0^T(B_{P_4})^{-1}R_0+\displaystyle{\sum_{i=1}^N}{R}_i^TQ_P^{(i)}(B_i)^{-1}Q_P^{(i)^T}{R}_i.
\end{equation*}

\begin{remark}
All of the above theorems also hold for $P_4^{-1}$, with the constant $2$ replaced by $C_{max} = O(H/h)$.
\end{remark}

\section{Numerical Experiments}
\label{section_6}

We present numerical results for problem \cref{hel_cont} with $f = 0$ and varying wavenumbers $k$. The computational domain is the square $\Omega = (0,1)^2$, with an impedance boundary condition defined by the plane wave $e^{\textbf{i}k\vec{V} \cdot \vec{X}}$ on $\partial \Omega$. Here, the direction vector is $\vec{V} = \langle \cos(\pi/8), \sin(\pi/8) \rangle$, and $\vec{X}$ represents the coordinate vector of the wave. The domain $\Omega$ is discretized using a triangulation $\mathcal{T}_h$, which consists of $1/h^2$ congruent squares, each subdivided into two right-angled triangular elements. Additionally, we partition the square domain into $1/H^2$ congruent square subdomains, where $H$ is an integer multiple of $h$.

To evaluate the performance of the four preconditioners, we employ different finite element spaces, focusing primarily on $P_{2}^{-1}$ and $P_{4}^{-1}$ due to their lower computational cost. For all tests, we set $\beta = 0.01$ in the generalized eigenvalue problems \cref{gen_eig_R00} to ensure local problem solvability. For the real part generalized eigenvalue problems \cref{gen_eig_R}, we test performance with varying $\eta_{Re}$ and select eigenvalues outside $(1-\eta_{Re},1+\eta_{Re})$ to construct the coarse problem. For $P_{4}^{-1}$, we additionally consider the imaginary part generalized eigenvalue problems \cref{gen_eig_I} with $\eta_{Im}=0.9$, but no eigenfunctions are selected since all eigenvalues lie within $(0.1,1.9)$. Therefore, all tests primarily display the iteration counts and selected eigenfunctions from the real part eigenvalue problem. The method is implemented via \cref{Alg_1}, where $N$ local problems are solved both pre- and post-iteration, and GMRES solves the preconditioned system with $B_0$ until the relative residual error falls below $10^{-6}$ in the $\ell^2$-norm.

We first present numerical results using the linear finite element method, comparing the performance of preconditioners $P_{1}^{-1}$ and $P_{4}^{-1}$. The results, summarized in \cref{Table_1} and \cref{Table_2}, indicate that $P_{1}^{-1}$ selects a smaller number of eigenvalues and achieves a slightly better convergence rate, consistent with the convergence theorem. However, the global matrix size for $P_{1}^{-1}$ is twice the number of selected eigenfunctions, while for $P_{4}^{-1}$ it equals the number of selected eigenfunctions, therefore the two preconditioners have similar sizes of global matrices. Considering $P_{4}^{-1}$'s superior scalability and lower computational cost for solving the eigenvalue problem, we conclude that $P_{4}^{-1}$ is the more effective preconditioner overall.

\begin{table}[tbhp]
    \centering
    \small
    \begin{minipage}{0.48\textwidth}
        \centering
        \begin{tabular}{|c|c|c|c|c|}
        \hline
        \multicolumn{5}{|c|}{$k=20$} \\
        \hline
        \diagbox[width=3em]{$h$}{$H$} & $1/2$ & $1/4$ & $1/8$ & $1/16$ \\ 
        \hline
        $1/32$  & 8(18) & 8(9) & 7(4) & 6(3) \\ 
        \hline
        $1/64$  & 7(22) & 7(12) & 6(8) & 7(4) \\ 
        \hline
        $1/128$ & 7(22) & 7(12) & 7(8) & 8(5) \\ 
        \hline
        \end{tabular}
    \end{minipage}
    \hfill
    \begin{minipage}{0.48\textwidth}
        \centering
        \begin{tabular}{|c|c|c|c|c|}
        \hline
        \multicolumn{5}{|c|}{$k=30$} \\
        \hline
        \diagbox[width=3em]{$h$}{$H$} & $1/4$ & $1/8$ & $1/16$ & $1/32$ \\ 
        \hline
        $1/64$  & 7(16) & 6(9) & 7(4) & 6(3) \\ 
        \hline
        $1/128$ & 7(16) & 8(9) & 7(7) & 8(4) \\ 
        \hline
        $1/256$ & 8(16) & 9(9) & 8(7) & 9(5) \\ 
        \hline
        \end{tabular}
    \end{minipage}

    \vspace{1em}
    \begin{minipage}{0.7\textwidth}
        \centering
        \begin{tabular}{|c|c|c|c|c|}
        \hline
        \multicolumn{5}{|c|}{$k=40$} \\
        \hline
        \diagbox[width=3em]{$h$}{$H$} & $1/8$ & $1/16$ & $1/32$ & $1/64$ \\ 
        \hline
        $1/128$ & 7(12) & 7(8) & 9(4) & 6(3) \\ 
        \hline
        $1/256$ & 7(12) & 8(8) & 10(5) & 9(4) \\ 
        \hline
        $1/512$ & 8(12) & 8(8) & 14(5) & 11(5) \\ 
        \hline
        \end{tabular}
    \end{minipage}
 \vspace{1em}
    \caption{GMRES iterations with preconditioner $P_1^{-1}$ ($\beta=0.01$, $\eta=0.4$) using P1 finite element. Parentheses indicate the number of selected eigenfunctions per subdomain for the generalized eigenvalue problems \cref{gen_eig_R11} and \cref{gen_eig_R12}.}
    \label{Table_1}
\end{table}

\begin{table}[tbhp]
    \centering
    \small
    \begin{minipage}{0.48\textwidth}
        \centering
        \begin{tabular}{|c|c|c|c|c|}
        \hline
        \multicolumn{5}{|c|}{$k=20$} \\
        \hline
        \diagbox[width=3em]{$h$}{$H$} & $1/2$ & $1/4$ & $1/8$ & $1/16$ \\ 
        \hline
        $1/32$  & 10(17) & 10(8) & 9(4) & 8(6) \\ 
        \hline
        $1/64$  & 11(25) & 11(13) & 11(9) & 10(7) \\ 
        \hline
        $1/128$ & 11(27) & 10(17) & 10(12) & 10(10) \\ 
        \hline
        \end{tabular}
    \end{minipage}
    \hfill
    \begin{minipage}{0.48\textwidth}
        \centering
        \begin{tabular}{|c|c|c|c|c|}
        \hline
        \multicolumn{5}{|c|}{$k=30$} \\
        \hline
        \diagbox[width=3em]{$h$}{$H$} & $1/4$ & $1/8$ & $1/16$ & $1/32$ \\ 
        \hline
        $1/64$  & 11(13) & 15(7) & 9(6) & 8(6) \\ 
        \hline
        $1/128$ & 10(19) & 14(12) & 12(9) & 11(8) \\ 
        \hline
        $1/256$ & 10(24) & 16(16) & 13(13) & 13(9) \\ 
        \hline
        \end{tabular}
    \end{minipage}

    \vspace{1em}
    \begin{minipage}{0.7\textwidth}
        \centering
        \begin{tabular}{|c|c|c|c|c|}
        \hline
        \multicolumn{5}{|c|}{$k=40$} \\
        \hline
        \diagbox[width=3em]{$h$}{$H$} & $1/8$ & $1/16$ & $1/32$ & $1/64$ \\ 
        \hline
        $1/128$ & 11(13) & 12(9) & 11(7) & 8(6) \\ 
        \hline
        $1/256$ & 10(17) & 10(12) & 12(10) & 12 (9) \\ 
        \hline
        $1/512$ & 10(20) & 9(16) & 13(13) & 10(11) \\ 
        \hline
        \end{tabular}
    \end{minipage}
 \vspace{1em}
    \caption{GMRES iterations with preconditioner $P_4^{-1}$ ($\beta=0.01$, $\eta_{Re}=0.4$, $\eta_{Im}=0.9$) using P1 finite element. Parentheses indicate the number of selected eigenfunctions per subdomain for the generalized eigenvalue problems \cref{gen_eig_R}.}
    \label{Table_2}
\end{table}

As the wavenumber $k$ increases, the mesh size $h$ must decrease to maintain proper resolution for the Helmholtz equation, consequently increasing iteration counts. We evaluate $P_{4}^{-1}$'s performance for large wavenumbers using both P1 and P2 finite elements in \cref{Table_3} and \cref{Table_4}, respectively. Our numerical experiments show that reducing the threshold parameter $\eta$ effectively mitigates the iteration growth at higher wavenumbers.  For fixed values of $k$ and $h$, reducing the subdomain size $H$ leads to larger global problem dimensions, as our approximation strategy for near-zero eigenvalues becomes less effective with decreasing $H$. Thus, optimal selection of both $H$ and $\eta$ is crucial for Helmholtz preconditioning. The optimal choice of $H$ depends on computational constraints: the processing capacity per compute node and the maximum locally solvable eigenvalue problem size. The threshold $\eta$ is determined empirically. The rule of thumb for choosing an appropriate threshold is to select as few eigenfunctions as possible while ensuring reasonably fast convergence. We avoid selecting an excessively small threshold, as this would include too many eigenfunctions, thereby increasing computational costs despite potentially reducing iteration counts. Furthermore, as demonstrated by the convergence theorem, the impact of the imaginary part is minimal - even when $\eta_{Im}$ decreases to $0.5$, only one additional eigenvalue is chosen per subdomain, with negligible effect on the convergence rate. Since $\eta_{Im}=0.9$ selects no eigenfunctions for the imaginary part generalized eigenvalue problems, the global problem is constructed solely from eigenfunctions selected in the real part. Finally, we note that selecting all eigenvalues from the real and imaginary parts yields  a direct solver.

\begin{table}[tbhp]
    \centering
    \small
    \begin{minipage}{0.48\textwidth}
        \centering
        \begin{tabular}{|c|c|c|c|c|}
        \hline
        \multicolumn{5}{|c|}{$k=50$} \\
        \hline
        \diagbox[width=3em]{$h$}{$H$} & $1/8$ & $1/16$ & $1/32$ & $1/64$ \\ 
        \hline
        $1/256$  & 9(27) & 9(17) & 11(13) & 8(11) \\ 
        \hline
        $1/512$  & 9(28) & 9(20) & 9(16) & 10(13) \\ 
        \hline
        $1/1024$ & 9(29) & 9(21) & 9(17) & 9(15) \\ 
        \hline
        \end{tabular}
    \end{minipage}
 \hfill
    \begin{minipage}{0.48\textwidth}
        \centering
        \begin{tabular}{|c|c|c|c|c|}
        \hline
        \multicolumn{5}{|c|}{$k=60$} \\
        \hline
        \diagbox[width=3em]{$h$}{$H$} & $1/16$ & $1/32$ & $1/64$ & $1/128$ \\ 
        \hline
        $1/512$  & 9(20) & 9(16) & 10(13) & 9(11) \\ 
        \hline
        $1/1024$ & 9(21) & 10(16) & 9(16) & 10(13) \\ 
        \hline
        $1/2048$ & 9(27) & 9(23) & 10(17) & 9(15) \\ 
        \hline
        \end{tabular}
    \end{minipage}
 \vspace{1em}
    \caption{GMRES iterations with preconditioner $P_4^{-1}$ ($\beta=0.01$, $\eta_{Re}=0.2$, $\eta_{Im}=0.9$) using P1 finite element. Parentheses indicate the number of selected eigenfunctions per subdomain for the generalized eigenvalue problems \cref{gen_eig_R}.}
    \label{Table_3}
\end{table}

\begin{table}[tbhp]
    \centering
    \small
    \begin{minipage}{0.48\textwidth}
        \centering
        \begin{tabular}{|c|c|c|c|c|}
        \hline
        \multicolumn{5}{|c|}{$k=50$} \\
        \hline
        \diagbox[width=3em]{$h$}{$H$} & $1/8$ & $1/16$ & $1/32$ & $1/64$ \\ 
        \hline
        $1/128$  &  7(28) &  6(20) & 7(15) &  7(12) \\ 
        \hline
        $1/256$  &  7(28) &  7(20) & 7(16) & 10(13) \\ 
        \hline
        $1/512$ &  7(29) &  7(21) &  8(18) &  7(16) \\ 
        \hline
        \end{tabular}
    \end{minipage}
 \hfill
    \begin{minipage}{0.48\textwidth}
        \centering
        \begin{tabular}{|c|c|c|c|c|}
        \hline
        \multicolumn{5}{|c|}{$k=60$} \\
        \hline
        \diagbox[width=3em]{$h$}{$H$} & $1/16$ & $1/32$ & $1/64$ & $1/128$ \\ 
        \hline
        $1/256$  &  7(20) &  7(16) &  10(13) &  7(12) \\ 
        \hline
        $1/512$ &  7(23) &  8(17) &  7(16) &  10(13) \\ 
        \hline
        $1/1024$ & 7(28) & 7(23) & 7(20) &  8(16) \\ 
        \hline
        \end{tabular}
    \end{minipage}
 \vspace{1em}
    \caption{GMRES iterations with preconditioner $P_4^{-1}$ ($\beta=0.01$, $\eta_{Re}=0.2$, $\eta_{Im}=0.9$) using P2 finite element. Parentheses indicate the number of selected eigenfunctions per subdomain for the generalized eigenvalue problems \cref{gen_eig_R}.}
    \label{Table_4}
\end{table}

We next present results for the linear HDG method. For implementation details of the HDG method, we refer to \cite{cockburn2009unified}. A crucial aspect of HDG for the Helmholtz equation is the selection of the stabilization parameter $\tau$, which may be either purely real or purely imaginary. Stability and error analyses for these choices are available in \cite{zhu2021preasymptotic, cui2014analysis}. When $\tau$ is purely real, the imaginary part of the resulting HDG linear system becomes block diagonal, with each block corresponding to an element. This structure allows direct application of $P_{2}^{-1}$ without considering the imaginary part generalized eigenvalue problems. Conversely, when $\tau$ is purely imaginary, the imaginary part of the linear system forms a globally assembled matrix, requiring the use of $P_{4}^{-1}$. In this case, since the local problem is well-posed, we avoid solving the first generalized eigenvalue problem, resulting in the simplified local solver $B_i = R_iBR_i^T = B_{II}^{(i)}$.

In the numerical tests below, we evaluate the performance of these two preconditioners for different choices of $\tau$. \Cref{Table_5} and \Cref{Table_6} present the results for real $\tau$ with varying thresholds $\eta$, while \Cref{Table_7} shows the corresponding results for imaginary $\tau$. The preconditioner performs significantly better for real $\tau$ than for imaginary $\tau$, since for real $\tau$ we only need to solve real eigenvalue problems, and the imaginary part remains block diagonal due to HDG properties - making the preconditioner's imaginary part identical to the original system's imaginary component. Similar to FEM cases, our experiments confirm that increasing wavenumber $k$ and decreasing mesh size $h$ requires reducing threshold parameter $\eta$ to maintain convergence rates. For fixed $k$ and $h$, smaller subdomain size $H$ increases coarse problem dimensions due to our preconditioner's approximation properties, making optimal selection of both $H$ and $\eta$ crucial for Helmholtz preconditioning. Theoretically, we select eigenvalues either smaller than $1-\eta$ or larger than $1+\eta$. Numerically, we observe that for Helmholtz problems, larger eigenvalues significantly degrade convergence rates - unlike elliptic problems where large eigenvalues have a modest impact on the convergence rate. Furthermore, we find that selecting eigenvalues either smaller than $1-\eta$ or larger than 2 achieves comparable convergence rates to those shown in the tables, while substantially reducing the number of selected eigenfunctions per subdomain and consequently decreasing the global problem size.

\begin{table}[tbhp]
    \centering
    \small
    \begin{minipage}{0.48\textwidth}
        \centering
        \begin{tabular}{|c|c|c|c|c|}
        \hline
        \multicolumn{5}{|c|}{$k=20$} \\
        \hline
        \diagbox[width=3em]{$h$}{$H$} & $1/2$ & $1/4$ & $1/8$ & $1/16$ \\ 
        \hline
        $1/32$  & 18(17) & 14(9) & 13(6) & 19(4) \\ 
        \hline
        $1/64$  & 11(30) & 14(13) & 12(9) & 11(6) \\ 
        \hline
        \end{tabular}
    \end{minipage}
    \hfill
    \begin{minipage}{0.48\textwidth}
        \centering
        \begin{tabular}{|c|c|c|c|c|}
        \hline
        \multicolumn{5}{|c|}{$k=30$} \\
        \hline
        \diagbox[width=3em]{$h$}{$H$} & $1/4$ & $1/8$ & $1/16$ & $1/32$ \\ 
        \hline
        $1/64$  & 12(16) & 12(10) & 13(6) & 21(4) \\ 
        \hline
        $1/128$ & 13(24) & 12(14) & 12(8) & 11(6) \\ 
        \hline
        \end{tabular}
    \end{minipage}

    \vspace{1em}
    \begin{minipage}{0.48\textwidth}
        \centering
        \begin{tabular}{|c|c|c|c|c|}
        \hline
        \multicolumn{5}{|c|}{$k=40$} \\
        \hline
        \diagbox[width=3em]{$h$}{$H$} & $1/8$ & $1/16$ & $1/32$ & $1/64$ \\ 
        \hline
        $1/128$ & 15(13) & 13(9) & 13(6) & 22(4) \\ 
        \hline
        $1/256$ &  18(25) &  11(14) & 14(7) & 13(6) \\ 
        \hline
        \end{tabular}
    \end{minipage}
     \hfill
      \begin{minipage}{0.48\textwidth}
        \centering
        \begin{tabular}{|c|c|c|c|c|}
        \hline
        \multicolumn{5}{|c|}{$k=50$} \\
        \hline
        \diagbox[width=3em]{$h$}{$H$} & $1/16$ & $1/32$ & $1/64$ & $1/128$ \\ 
        \hline
        $1/256$ & 12(14) & 17(7) & 14(6) & 23(4) \\ 
        \hline
        $1/512$ &  10(25) &  11(14) & 13(7) & 14(6) \\ 
        \hline
        \end{tabular}
    \end{minipage}
     \vspace{1em}
    \caption{GMRES iterations with preconditioner $P_2^{-1}$ ($\beta=0.01$, $\eta=0.7$) using linear HDG. Parentheses indicate the number of selected eigenfunctions per subdomain for the generalized eigenvalue problems \cref{gen_eig_R}.}
    \label{Table_5}
\end{table}

\begin{table}[tbhp]
    \centering
    \small
    \begin{minipage}{0.48\textwidth}
        \centering
        \begin{tabular}{|c|c|c|c|c|}
        \hline
        \multicolumn{5}{|c|}{$k=20$} \\
        \hline
        \diagbox[width=3em]{$h$}{$H$} & $1/2$ & $1/4$ & $1/8$ & $1/16$ \\ 
        \hline
        $1/32$  & 7(53) & 8(24) & 7(13) & 7(6) \\ 
        \hline
        $1/64$  & 6(105) & 7(51) & 7(25) & 6(12) \\ 
        \hline
        \end{tabular}
    \end{minipage}
    \hfill
    \begin{minipage}{0.48\textwidth}
        \centering
        \begin{tabular}{|c|c|c|c|c|}
        \hline
        \multicolumn{5}{|c|}{$k=30$} \\
        \hline
        \diagbox[width=3em]{$h$}{$H$} & $1/4$ & $1/8$ & $1/16$ & $1/32$ \\ 
        \hline
        $1/64$  & 7(50) & 7(25) & 7(13) & 20(5) \\ 
        \hline
        $1/128$ & 7(103) & 7(50) & 7(25) & 6(12) \\ 
        \hline
        \end{tabular}
    \end{minipage}

    \vspace{1em}
    \begin{minipage}{0.48\textwidth}
        \centering
        \begin{tabular}{|c|c|c|c|c|}
        \hline
        \multicolumn{5}{|c|}{$k=40$} \\
        \hline
        \diagbox[width=3em]{$h$}{$H$} & $1/8$ & $1/16$ & $1/32$ & $1/64$ \\ 
        \hline
        $1/128$ &  7(51) & 7(25) & 7(12) & 21(6) \\ 
        \hline
        $1/256$ & 6(102) & 6(51) & 6(26) & 6(12) \\ 
        \hline
        \end{tabular}
    \end{minipage}
     \hfill
      \begin{minipage}{0.48\textwidth}
        \centering
        \begin{tabular}{|c|c|c|c|c|}
        \hline
        \multicolumn{5}{|c|}{$k=50$} \\
        \hline
        \diagbox[width=3em]{$h$}{$H$} & $1/16$ & $1/32$ & $1/64$ & $1/128$ \\ 
        \hline
        $1/256$ &  7(50) & 7(26) & 6(12) & 23(6) \\
        \hline
        $1/512$ &  6(102) & 6(51) & 6(26) & 6(12) \\
        \hline
        \end{tabular}
    \end{minipage}
     \vspace{1em}
    \caption{GMRES iterations with preconditioner $P_2^{-1}$ ($\beta=0.01$, $\eta=0.4$) using linear HDG with $\tau_K=
    k$. Parentheses indicate the number of selected eigenfunctions per subdomain for the generalized eigenvalue problems \cref{gen_eig_R}.}
    \label{Table_6}
\end{table}

\begin{table}[tbhp]
    \centering
    \small
    \setlength{\tabcolsep}{3pt} 
    \begin{minipage}{0.48\textwidth}
        \centering
        \begin{tabular}{|c|c|c|c|c|}
        \hline
        \multicolumn{5}{|c|}{$k=20$} \\
        \hline
        \diagbox[width=2.5em]{$h$}{$H$} & $1/2$ & $1/4$ & $1/8$ & $1/16$ \\ 
        \hline
        $1/32$  &  15(17,14) & 14(9,8) & 14(6,5) & 19(4,2) \\ 
        \hline
        $1/64$  &  14(27,27) & 14(13,18) & 14(9,9) & 13(6,4) \\ 
        \hline
        \end{tabular}
    \end{minipage}
    \hfill
    \begin{minipage}{0.48\textwidth}
        \centering
        \begin{tabular}{|c|c|c|c|c|}
        \hline
        \multicolumn{5}{|c|}{$k=30$} \\
        \hline
        \diagbox[width=2.5em]{$h$}{$H$} & $1/4$ & $1/8$ & $1/16$ & $1/32$ \\ 
        \hline
        $1/64$  &  14(15,16) & 12(10,9) & 14(6,5) & 25(4,2)\\ 
        \hline
        $1/128$ &  15(24,36) & 14(14,18) & 15(8,9) & 16(6,4) \\ 
        \hline
        \end{tabular}
    \end{minipage}

    \vspace{6pt}
    \begin{minipage}{0.48\textwidth}
        \centering
        \begin{tabular}{|c|c|c|c|c|}
        \hline
        \multicolumn{5}{|c|}{$k=40$} \\
        \hline
        \diagbox[width=2.5em]{$h$}{$H$} & $1/8$ & $1/16$ & $1/32$ & $1/64$ \\ 
        \hline
        $1/128$ &  15(13,18) & 15(9,9) & 17(6,4) & 32(4,2) \\ 
        \hline
        $1/256$ &  21(25,36) & 15(14,18) & 21(7,9) & 22(6,4) \\ 
        \hline
        \end{tabular}
    \end{minipage}
    \hfill
    \begin{minipage}{0.48\textwidth}
        \centering
        \begin{tabular}{|c|c|c|c|c|}
        \hline
        \multicolumn{5}{|c|}{$k=50$} \\
        \hline
        \diagbox[width=2.5em]{$h$}{$H$} & $1/16$ & $1/32$ & $1/64$ & $1/128$ \\ 
        \hline
        $1/256$ &  15(14,18) & 24(7,9) & 23(6,4) & 44(4,2) \\ 
        \hline
        $1/512$ & 16(25,36) & 17(14,18) & 24(7,9) & 33(6,5) \\ 
        \hline
        \end{tabular}
    \end{minipage}
    
    \vspace{6pt}
    \caption{GMRES iterations with preconditioner $P_4^{-1}$ ($\beta=0.01$, $\eta_{Re}=0.7$, $\eta_{Im}=0.9$) using linear HDG with $\tau_K=\frac{\textbf{i}}{h}$. Parentheses indicate the number of selected eigenfunctions per subdomain for the generalized eigenvalue problem \cref{gen_eig_R} and \cref{gen_eig_I}, respectively.}
    \label{Table_7}
\end{table}

\section{Conclusion}
\label{sec:conclusion}

In this paper, we introduce a novel iterative substructuring approach that guarantees the well-posedness of local Dirichlet problems for arbitrary wavenumbers. The proposed substructuring framework provides a general structure that can be readily adopted by other domain decomposition methods. Building on this structure and motivated by the success of generalized eigenvalue problems in preconditioning elliptic equations with heterogeneous coefficients, we develop two new classes of preconditioners within the NOSAS framework: one targeting the real part of the Helmholtz operator and another jointly addressing its real and imaginary components. We establish asymptotic convergence theorems for near-zero thresholds and present supporting numerical results for various discretizations with large wavenumbers.

Our preconditioners possess several features:
\begin{enumerate}
    \item They are of nonoverlapping type, offering lower computational costs and memory requirements compared to overlapping methods.
    \item The generalized eigenvalue problems involve only real matrices with symmetric positive-definite right-hand sides, enabling efficient computation via Cholesky decomposition while ensuring high numerical accuracy in local eigenvalue and eigenvector computations.
    \item The construction is purely algebraic, based solely on local Neumann matrices, which allows for easy adaptation to various discretizations, heterogeneous coefficients, and complex geometries without geometric constraints.
\end{enumerate}

Future research directions include extensions to multilevel preconditioners and applications to related wave-propagation problems. The generality of our substructuring framework suggests potential adaptations to other domain decomposition methods, particularly for high-frequency or heterogeneous problems.

\section{Appendix}
\begin{theorem}[Sherman-Morrison-Woodbury formula]
\label{woodbury}
Let $A$ be an $n \times n$ matrix, $C_i$ be a $k_i \times k_i$ matrix, $U_i$ be an $n \times k_i$ matrix, and $V_i$ be a $k_i \times n$ matrix for $i = 1, 2$. Define $U = [U_1, U_2]$, $V = \begin{bmatrix} V_1 \\ V_2 \end{bmatrix}$, and $C = \begin{bmatrix} C_1 & \\ & C_2 \end{bmatrix}$. Assume that $A$, $C$, and $C^{-1} + VA^{-1}U$ are invertible. Then, $A + U_1C_1V_1 + U_2C_2V_2$ is also invertible, and we have that 
\begin{equation}
    (A + U_1C_1V_1 + U_2C_2V_2)^{-1} = A^{-1} - A^{-1}U(C^{-1} + VA^{-1}U)^{-1}VA^{-1}.
\end{equation}
\end{theorem}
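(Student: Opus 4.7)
The plan is to reduce the statement to the standard Sherman--Morrison--Woodbury identity by exploiting the block structure, and then to verify the formula by direct multiplication. The result is essentially a bookkeeping exercise rather than a genuinely hard theorem, so the emphasis is on a clean algebraic derivation.

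First, I would observe that with $U = [U_1, U_2]$, $V = \begin{bmatrix} V_1 \\ V_2 \end{bmatrix}$, and $C = \mathrm{diag}(C_1, C_2)$, the block multiplication yields
\begin{equation*}
 U_1 C_1 V_1 + U_2 C_2 V_2 \;=\; U C V.
\end{equation*}
Since $C$ is block diagonal with invertible blocks, $C$ is invertible with $C^{-1} = \mathrm{diag}(C_1^{-1}, C_2^{-1})$, so the three invertibility hypotheses reduce to the standard ones: $A$ invertible, $C$ invertible, and $C^{-1} + V A^{-1} U$ invertible. The claim therefore becomes the classical identity
\begin{equation*}
 (A + UCV)^{-1} \;=\; A^{-1} - A^{-1} U (C^{-1} + V A^{-1} U)^{-1} V A^{-1}.
\end{equation*}

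Next, I would verify this identity by direct multiplication. Let $M := A + UCV$ and let $N$ denote the proposed inverse on the right-hand side. Expanding $MN$ gives
\begin{equation*}
 MN \;=\; I + UCVA^{-1} - (I + UCVA^{-1}) U (C^{-1} + V A^{-1} U)^{-1} V A^{-1}.
\end{equation*}
The key algebraic step is the factorization
\begin{equation*}
 (I + UCVA^{-1}) U \;=\; U + UCVA^{-1} U \;=\; U C \bigl(C^{-1} + VA^{-1}U\bigr),
\end{equation*}
after which the middle factor $(C^{-1} + VA^{-1}U)$ cancels with its inverse and the remaining terms collapse to $MN = I + UCVA^{-1} - UCVA^{-1} = I$. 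The computation of $NM$ is symmetric: one uses the analogous identity $U(C^{-1} + VA^{-1}U) = (I + UCVA^{-1})U$ read from the left. Producing both a left and a right inverse establishes invertibility of $M$ and the claimed formula.

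The only step that might be called a genuine obstacle is recognizing the correct factorization $U + UCVA^{-1}U = UC(C^{-1} + VA^{-1}U)$, which is the reason the expression $C^{-1} + VA^{-1}U$ appears in the formula in the first place; once this is seen, everything else is mechanical. No further machinery is needed, and the argument makes no use of any structure beyond invertibility, so the extension from the one-term version (standard Woodbury) to the two-term version handled here is purely a matter of collecting $U_1 C_1 V_1 + U_2 C_2 V_2$ into the single product $UCV$.
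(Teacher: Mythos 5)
Your reduction $U_1C_1V_1 + U_2C_2V_2 = UCV$ with $C = \mathrm{diag}(C_1,C_2)$ is exactly the paper's argument; the paper then simply cites the standard Sherman--Morrison--Woodbury identity, whereas you go on to verify it by direct multiplication, which makes your version more self-contained but not different in substance. One small slip: for the left-inverse computation $NM=I$ the analogous factorization involves $V$, namely $V + VA^{-1}UCV = (C^{-1}+VA^{-1}U)CV$, not the identity $U(C^{-1}+VA^{-1}U)=(I+UCVA^{-1})U$ you wrote (whose left side is $UC^{-1}+UVA^{-1}U$, not $U+UCVA^{-1}U$); the conclusion is unaffected.
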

\begin{proof}
Note that
$UCV = U_1C_1V_1 + U_2C_2V_2$. Therefore, we can apply the Sherman-Morrison-Woodbury formula to $A + UCV$.
\end{proof}
We note that if $A$, $C$, and $A + UCV$ are invertible, then $C^{-1} + VA^{-1}U$ must also be invertible.

\bibliographystyle{siamplain}
\bibliography{references}

\end{document}